%
%
%
%
\documentclass[12pt,twoside,a4paper,reqno]{amsart}

\setlength{\oddsidemargin}{10mm}
\setlength{\evensidemargin}{10mm}
\setlength{\textwidth}{400pt}

\usepackage{graphicx}
\usepackage{graphics}
\usepackage{amssymb}
\usepackage{eucal}
\usepackage{amsmath}
\usepackage{amsthm}
\newtheorem{theorem}{Theorem}[section]
\newtheorem{lemma}[theorem]{Lemma}

\newtheorem{proposition}[theorem]{Proposition}
\newtheorem{corollary}[theorem]{Corollary}

\theoremstyle{definition}
\newtheorem{definition}[theorem]{Definition}
\newtheorem{example}[theorem]{Example}

\theoremstyle{remark}
\newtheorem{remark}[theorem]{Remark}

\numberwithin{equation}{section}



\begin{document}

\title{Characterization of  continuous $g$-frames \\via operators}

\author{Morteza Rahmani}
\address{Young Researchers and Elite Club, Ilkhchi Branch, Islamic Azad University,
Ilkhchi, Iran
}
\email{morteza.rahmany@gmail.com}


\subjclass[2010]{Primary 42C15, 46C05}



\keywords{Hilbert space, $c$-frame, $cg$-frame, $cg$-orthonormal basis}

\begin{abstract}
In this paper we introduce and show some new notions and results on $cg$-frames of Hilbert spaces.
We define $cg$-orthonormal bases for a Hilbert space $H$ and verify their properties and relations with $cg$-frames.
Actually, we present that every $cg$-frame can be represented as a composition of  a $cg$-orthonormal basis and an operator  under some conditions.
Also, we find for any $cg$-frame an induced $c$-frame and study their properties and relations.
Moreover, we  show that every $cg$-frame can be written as  aggregate  of two Parseval $cg$-frames.
In addition, We show each $cg$-frame  as a summation of  a $cg$-orthonormal basis and a $cg$-Riesz basis.
\end{abstract}

\maketitle

\section{Introduction}

Frames (discrete frames) in Hilbert spaces were introduced by Duffin and Schaeffer \cite{Duff} in 1952 to study some deep problems in nonharmonic Fourier series. After the illustrious paper \cite{Daub} by Daubechies, Grossmann and Meyer, frame theory popularized immensely.

A frame for a Hilbert space  allows each vector in the space to be written as a linear
combination of the elements in the frame, but linear independence between
the frame elements is not required. Intuitively, a frame can be thought
as a basis to which one has added more elements.

Generally, frames have been used in signal processing, image processing, data compression and sampling theory.
Later, motivated by the theory of coherent states, this concept was generalized  to families indexed by some locally compact space endowed with a Radon measure. This approach leads to the notion of continuous frames \cite{[1], [2], [31], [43]}.
Some results about continuous frames and their generalizations can be found in \cite{pg-frame, bg-frame, pframe, RND}.

In this paper, inspired by \cite{Ra} and \cite{Guo},  we generalize some results to $cg$-frames.
\\The paper is organized as follows. In Section 2, we introduce the concept of $cg$-orthonormal bases for Hilbert spaces and discuss about their characteristics and their relations with $cg$-frames and  $c$-frames. Our aim in Section 3 is
 describing every continuous $g$-frame as a  sum of two Parseval continuous $g$-frames. We also present that every continuous $g$-frame can be written as a linear combination of an $cg$-orthonormal basis and a $cg$-Riesz basis.

\vspace{2mm}

Throughout this paper, $H$ is a separable Hilbert space, $(\Omega,\mu)$ is a measure space with positive measure $\mu$ and $\{H_{\omega}\}_{\omega \in\Omega}$ is a family of separable Hilbert spaces.

We first review the definition of continuous frames and continuous  $g$-frames.

\begin{definition}
{\rm(\cite{RND})} Suppose that $(\Omega,\mu)$ is a measure space with positive measure $\mu$. A mapping $f:\Omega\longrightarrow H$ is called a \emph{continuous frame}, or simply a \emph{$c$-frame}, with respect to $(\Omega,\mu)$ for $H$, if:
\\$(i)$ For each $h\in H$, $\omega\longmapsto \langle h,f(\omega)\rangle$ is a measurable function,
\\$(ii)$ there exist positive constants $A$ and $B$ such that
\begin{align}\label{df01}
A\|h\|^{2}\leq \int_{\Omega} |\langle h,f(\omega)\rangle|^{2} d\mu(\omega)\leq B\|h\|^{2},\quad h\in H.
\end{align}

The constants $A,B$ are called \emph{$c$-frame bounds}. If $A,B$ can be chosen such that $A=B$, then $f$ is called a \emph{tight $c$-frame} and if
$A = B = 1$, it is called a Parseval $c$-frame. A mapping $f$ is called \emph{$c$-Bessel mapping} if the second inequality in (\ref{df01}) holds.
In this case, $B$ is called the \emph{Bessel bound}.
\end{definition}


Some operators associated to  $c$-Bessel mappings can be useful to characterize them.

\begin{proposition}\label{TT}
{\rm(\cite{RND})} Let  $(\Omega,\mu)$ be a measure space and $f:\Omega\longrightarrow H$ be a $c$-Bessel mapping for $H$. Then the operator
$ T_{f} : L^{2}(\Omega,\mu)\longrightarrow H$, weakly defined by
\begin{align}\label{T}
\langle T_{f}\varphi,h\rangle=\int_{\Omega}\varphi(\omega)\langle f(\omega),h\rangle d\mu(\omega),\quad h\in H,
\end{align}
is well defined, linear, bounded, and its adjoint is given by
\begin{align}\label{T*}
T^*_{f} : H\longrightarrow L^{2}(\Omega,\mu),\quad T^*_{f}h(\omega)=\langle h,f(\omega)\rangle,~\,\omega\in \Omega.
\end{align}
\end{proposition}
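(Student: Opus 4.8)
The plan is to construct $T_f\varphi$ vector by vector via the Riesz representation theorem, so the first task is to verify that the integral on the right-hand side of \eqref{T} converges absolutely for every $\varphi\in L^{2}(\Omega,\mu)$ and every $h\in H$. For a fixed $h$, the $c$-Bessel condition tells us that $\omega\mapsto\langle h,f(\omega)\rangle$ is measurable and belongs to $L^{2}(\Omega,\mu)$ with norm at most $\sqrt{B}\,\|h\|$. Hence, by the Cauchy--Schwarz inequality in $L^{2}(\Omega,\mu)$,
\begin{align*}
\int_{\Omega}\bigl|\varphi(\omega)\langle f(\omega),h\rangle\bigr|\,d\mu(\omega)\leq \|\varphi\|_{2}\left(\int_{\Omega}|\langle h,f(\omega)\rangle|^{2}\,d\mu(\omega)\right)^{1/2}\leq \sqrt{B}\,\|\varphi\|_{2}\,\|h\|,
\end{align*}
so the integral makes sense, and the map $h\mapsto\overline{\int_{\Omega}\varphi(\omega)\langle f(\omega),h\rangle\,d\mu(\omega)}$ is a bounded linear functional on $H$ of norm at most $\sqrt{B}\,\|\varphi\|_{2}$.

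Next I would apply the Riesz representation theorem to obtain, for each $\varphi$, a unique vector which we name $T_f\varphi$ and which satisfies \eqref{T}. Uniqueness of the representing vector, combined with the fact that the right-hand side of \eqref{T} is linear in $\varphi$, forces $T_f(\alpha\varphi_{1}+\beta\varphi_{2})=\alpha T_f\varphi_{1}+\beta T_f\varphi_{2}$, so $T_f$ is linear. The norm estimate above then yields $\|T_f\varphi\|=\sup_{\|h\|\leq 1}|\langle T_f\varphi,h\rangle|\leq\sqrt{B}\,\|\varphi\|_{2}$, i.e.\ $T_f$ is bounded with $\|T_f\|\leq\sqrt{B}$.

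Finally, to identify the adjoint I would simply rewrite \eqref{T} as an inner product in $L^{2}(\Omega,\mu)$: for all $\varphi\in L^{2}(\Omega,\mu)$ and $h\in H$,
\begin{align*}
\langle T_f\varphi,h\rangle=\int_{\Omega}\varphi(\omega)\overline{\langle h,f(\omega)\rangle}\,d\mu(\omega)=\bigl\langle \varphi,\ \omega\mapsto\langle h,f(\omega)\rangle\bigr\rangle_{L^{2}(\Omega,\mu)}.
\end{align*}
By the defining property of the adjoint, this shows that $T_f^{*}h$ is exactly the function $\omega\mapsto\langle h,f(\omega)\rangle$, which lies in $L^{2}(\Omega,\mu)$ by the $c$-Bessel condition, giving \eqref{T*}. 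The only genuinely delicate point is the very first step --- the absolute convergence of the defining integral --- which is precisely where the Bessel bound $B$ is used; once the weak definition of $T_f$ is legitimate, linearity, the norm bound, and the formula for $T_f^{*}$ follow immediately.
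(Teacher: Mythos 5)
Your argument is correct and is the standard one: the paper itself states this proposition without proof (citing \cite{RND}), and the proof given there proceeds exactly as you do --- Cauchy--Schwarz together with the Bessel bound to get absolute convergence and the estimate $|\langle T_f\varphi,h\rangle|\leq\sqrt{B}\,\|\varphi\|_2\|h\|$, then Riesz representation for well-definedness, and the rewriting of \eqref{T} as an $L^2$ inner product to identify $T_f^*$. No gaps; your remark that the conjugate of the functional is what is linear in $h$ (so that Riesz representation applies) is exactly the small point worth being careful about.
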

The operator $T_{f}$ is called  \emph{synthesis operator} and $T^*_{f}$ is called
\emph{analysis operator} of $f$.

If $f$ is a $c$-Bessel mapping with respect to $(\Omega,\mu)$ for $H$, then the operator
$ S_{f} : H\longrightarrow H$ defined by $S_{f}=T_{f}T^*_{f}$, is called \emph{frame operator} of $f$. Thus
$$\langle S_{f}h,k\rangle=\int_{\Omega}\langle h,f(\omega)\rangle\langle f(\omega),k\rangle d\mu(\omega),\quad h,k\in H.$$
 If $f$ is a $c$-frame for $H$, then $S$ is invertible.

The converse of above proposition holds when $\mu$ is $\sigma$-finite in the measure space  $(\Omega,\mu)$.

\begin{proposition}\label{2.7}
{\rm(\cite{RND})} Let $(\Omega,\mu)$ be a measure space where $\mu$ is $\sigma$-finite. Let $f:\Omega\longrightarrow H$ be a mapping such that for each $h\in H$, $\omega\longmapsto\langle h,f(\omega)\rangle$ is measurable. If the mapping $T_f:L^{2}(\Omega,\mu)\longrightarrow H$ defined by (\ref{T}), is a bounded operator, then $f$ is a $c$-Bessel mapping.
\end{proposition}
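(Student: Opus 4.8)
The plan is to estimate the Bessel sum $\int_{\Omega}|\langle h,f(\omega)\rangle|^{2}\,d\mu(\omega)$ directly, rather than by first identifying $T_f^{*}$ with the analysis map as in (\ref{T*}). The latter route is circular here: we cannot yet assert that $\omega\mapsto\langle h,f(\omega)\rangle$ lies in $L^{2}(\Omega,\mu)$ — that is precisely the conclusion — so there is nothing to compare $T_f^{*}h$ with. Instead I would exploit $\sigma$-finiteness to exhaust $\Omega$ by sets of finite measure, truncate the (possibly non-integrable) function $\langle h,f(\cdot)\rangle$, plug the truncations into (\ref{T}), and then pass to the limit by monotone convergence.

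Concretely, fix $h\in H$ and write $\Omega=\bigcup_{n}E_{n}$ with $E_{n}$ increasing and $\mu(E_{n})<\infty$. Since $\omega\mapsto\langle h,f(\omega)\rangle$ is measurable by hypothesis, the sets $\Omega_{n}:=E_{n}\cap\{\omega\in\Omega:\,|\langle h,f(\omega)\rangle|\le n\}$ are measurable, increasing, and $\bigcup_{n}\Omega_{n}=\Omega$. Put $\varphi_{n}:=\chi_{\Omega_{n}}\,\langle h,f(\cdot)\rangle$; then $\varphi_{n}\in L^{2}(\Omega,\mu)$ because $\|\varphi_{n}\|_{2}^{2}\le n^{2}\mu(E_{n})<\infty$. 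Feeding $\varphi_{n}$ into (\ref{T}) and using $\langle f(\omega),h\rangle=\overline{\langle h,f(\omega)\rangle}$ gives
\[
\langle T_{f}\varphi_{n},h\rangle=\int_{\Omega}\varphi_{n}(\omega)\langle f(\omega),h\rangle\,d\mu(\omega)=\int_{\Omega_{n}}|\langle h,f(\omega)\rangle|^{2}\,d\mu(\omega)=:a_{n},
\]
which is finite (it equals $\|\varphi_{n}\|_{2}^{2}$). On the other hand, boundedness of $T_{f}$ yields $|\langle T_{f}\varphi_{n},h\rangle|\le\|T_{f}\|\,\|\varphi_{n}\|_{2}\,\|h\|=\|T_{f}\|\,\|h\|\,\sqrt{a_{n}}$. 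Combining the two estimates, $a_{n}\le\|T_{f}\|\,\|h\|\,\sqrt{a_{n}}$, hence $\sqrt{a_{n}}\le\|T_{f}\|\,\|h\|$, i.e. $a_{n}\le\|T_{f}\|^{2}\|h\|^{2}$ for every $n$.

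Finally, since $\chi_{\Omega_{n}}\uparrow 1$ pointwise, the monotone convergence theorem gives
\[
\int_{\Omega}|\langle h,f(\omega)\rangle|^{2}\,d\mu(\omega)=\lim_{n\to\infty}a_{n}\le\|T_{f}\|^{2}\|h\|^{2},
\]
so $f$ is a $c$-Bessel mapping with Bessel bound $B=\|T_{f}\|^{2}$. I expect the only genuinely delicate point to be the one already flagged — that the integrand is not a priori in $L^{2}$, which is exactly why the truncation, and therefore the $\sigma$-finiteness hypothesis, is needed; checking measurability of the $\Omega_{n}$, that $\varphi_{n}\in L^{2}$, and the monotone limit are all routine bookkeeping.
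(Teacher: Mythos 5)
Your argument is correct, and it is the standard proof of this fact: the paper itself gives no proof (the proposition is quoted from \cite{RND}), and the truncation-by-$\sigma$-finiteness argument you use --- testing $T_f$ against $\varphi_n=\chi_{\Omega_n}\langle h,f(\cdot)\rangle$ with $\Omega_n$ of finite measure and bounded integrand, deducing $a_n\le\|T_f\|^2\|h\|^2$, and passing to the limit by monotone convergence --- is exactly the proof in that reference. The only cosmetic point is that the step $a_n\le\|T_f\|\,\|h\|\sqrt{a_n}\Rightarrow\sqrt{a_n}\le\|T_f\|\,\|h\|$ implicitly assumes $a_n>0$, but the case $a_n=0$ is trivial.
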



\begin{theorem}\label{2.9}
{\rm(\cite{RND})} Suppose that $(\Omega,\mu)$ is a measure space where $\mu$ is $\sigma$-finite. Let $f:\Omega\longrightarrow H$ be a mapping such that for each $h\in H$, $\omega\longmapsto\langle h,f(\omega)\rangle$ is measurable. The mapping $f$ is a $c$-frame with respect to $(\Omega,\mu)$ for $H$ if and only if the operator $T_f:L^{2}(\Omega,\mu)\longrightarrow H$  defined by (\ref{T}), is a bounded and onto operator.
\end{theorem}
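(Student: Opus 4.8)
The statement to prove is Theorem~\ref{2.9}: under $\sigma$-finiteness, $f$ is a $c$-frame iff $T_f$ is bounded and onto. Let me sketch a proof plan.

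\medskip

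\textbf{Proof proposal.} The plan is to establish the equivalence in two directions, leaning on Proposition~\ref{2.7} to handle the Bessel/boundedness half and on a standard operator-theoretic argument for the lower frame bound.

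First, suppose $f$ is a $c$-frame. Then in particular $f$ is a $c$-Bessel mapping, so by Proposition~\ref{TT} the operator $T_f$ is well defined, linear and bounded, with adjoint $T_f^*h(\omega)=\langle h,f(\omega)\rangle$. The lower frame inequality $A\|h\|^2\le\int_\Omega|\langle h,f(\omega)\rangle|^2\,d\mu(\omega)$ says precisely that $A\|h\|^2\le\|T_f^*h\|^2_{L^2(\Omega,\mu)}$ for all $h\in H$; that is, $T_f^*$ is bounded below. A bounded operator that is bounded below has closed range and is injective, and its Hilbert-space adjoint $T_f$ therefore has dense range; combined with the fact that $T_f$ has closed range (equivalently, $\mathrm{ran}\,T_f=(\ker T_f^*)^\perp$, and $\ker T_f^*=0$ forces $\mathrm{ran}\,T_f=H$), we conclude $T_f$ is onto. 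I would spell this out via the identity $\mathrm{ran}\,T_f=H \iff T_f^*$ is bounded below, which is the cleanest route.

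Conversely, suppose $T_f$ is bounded and onto. Since $\mu$ is $\sigma$-finite and $\omega\mapsto\langle h,f(\omega)\rangle$ is measurable, Proposition~\ref{2.7} gives that $f$ is a $c$-Bessel mapping, which yields the upper bound in \eqref{df01} and also (via Proposition~\ref{TT}) identifies $T_f^*h(\omega)=\langle h,f(\omega)\rangle$. It remains to produce the lower bound. Because $T_f:L^2(\Omega,\mu)\to H$ is bounded and surjective, the open mapping theorem implies $T_f^*$ is bounded below: there is $C>0$ with $\|T_f^*h\|\ge C\|h\|$ for all $h\in H$. Writing this out, $\int_\Omega|\langle h,f(\omega)\rangle|^2\,d\mu(\omega)=\|T_f^*h\|^2\ge C^2\|h\|^2$, which is the lower $c$-frame inequality with $A=C^2$. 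Hence $f$ is a $c$-frame.

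\medskip

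The main obstacle is not any single deep step but rather assembling the correct functional-analytic lemma and invoking it with the right hypotheses: namely that for a bounded operator $U$ between Hilbert spaces, $U$ is surjective if and only if $U^*$ is bounded below (one direction is the open mapping theorem, the other is the closed-range theorem together with injectivity of $U^*$). I would state and prove this as a short preliminary observation, being careful that completeness of both $L^2(\Omega,\mu)$ and $H$ is what makes the open mapping theorem applicable, and that the $\sigma$-finiteness hypothesis enters only through Proposition~\ref{2.7} to secure the Bessel property in the converse direction.
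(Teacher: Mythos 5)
Your argument is correct and is essentially the standard proof of this result; the paper itself states Theorem~\ref{2.9} as a quoted fact from \cite{RND} without reproducing a proof, and the argument given there is the same one you outline. Your use of Proposition~\ref{2.7} (where the $\sigma$-finiteness enters) for the Bessel half and of the equivalence ``$U$ surjective $\iff$ $U^*$ bounded below'' for the lower bound is exactly the right decomposition, and your stated preliminary lemma is the correct bridge.
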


\begin{definition}
Let $\varphi\in\Pi_{\omega\in \Omega}H_\omega$. We say that $\varphi$ is \emph{strongly measurable} if $\varphi$ as
a mapping of $\Omega$ to $\oplus_{\omega \in \Omega}H_{\omega}$ is measurable, where
$$\Pi_{\omega\in \Omega}H_\omega=\big\{f:\Omega\longrightarrow \cup_{\omega\in \Omega} H_\omega~;~f(\omega) \in H_\omega\big\}.$$
\end{definition}

 Now, we review the definition of continuous $g$-frames.

\begin{definition}
We call $\{\Lambda_{\omega}\in B(H,H_\omega): \omega\in \Omega \}$ a \emph{continuous generalized frame}, or simply a   $cg$-frame, for $H$ with respect to $\{H_\omega\}_{\omega\in \Omega}$, if:
\\$(i)$ for each $f\in H$, $\{\Lambda_{\omega}f\}_{\omega\in \Omega}$ is strongly measurable,
\\$(ii)$ there are two positive constants $A$ and $B$ such that
\begin{align}\label{ccg}
A\|f\|^{2}\leq\int_{\Omega}\|\Lambda_\omega f\|^{2}d\mu(\omega)\leq B\|f\|^{2},\quad f\in H.
\end{align}
We call $A$ and $B$ the lower and upper  $cg$-frame bounds, respectively. If $A,B$ can be chosen such that $A=B$, then $\{\Lambda_{\omega}\}_{\omega\in \Omega}$ is called a \emph{tight $cg$-frame} and if
$A = B = 1$, it is called a Parseval $cg$-frame. A family $\{\Lambda_{\omega}\}_{\omega\in \Omega}$ is called \emph{$cg$-Bessel family} if the second inequality in (\ref{ccg}) holds.
\end{definition}

Now, let the space $\big(\oplus_{\omega \in \Omega}H_{\omega},\mu\big)_{L^2}\subseteq \Pi_{\omega\in \Omega}H_\omega$ be defined as follows,
\begin{align*}
\big(\oplus_{\omega \in \Omega}H_{\omega},\mu\big)_{L^2}=
\big\{\varphi |~\varphi~is~strongly~measurable, \int_{\Omega}\|\varphi(\omega)\|^{2}d\mu(\omega) < \infty\big\}.
\end{align*}
The space $\big(\oplus_{\omega \in \Omega}H_{\omega},\mu\big)_{L^2}$ is a Hilbert space with inner product
$$\langle\varphi,\psi \rangle=\int_{\Omega}\langle \varphi(\omega),\psi(\omega) \rangle d\mu(\omega).$$

\begin{proposition}\label{Tg}
{\rm(\cite{cg-frame})} Let $\{\Lambda_{\omega}\}_{\omega\in \Omega}$ be a $cg$-Bessel family for $H$ with respect to
$\{H_\omega\}_{\omega\in \Omega}$ with Bessel bound $B$. Then the mapping $T$ of $\big(\oplus_{\omega \in \Omega}H_{\omega},\mu\big)_{L^2}$ to $H$ defined by
\begin{align}\label{TT}
\langle T\varphi,h\rangle=\int_{\Omega} \langle \Lambda^{*}_{\omega}\varphi(\omega),h\rangle d\mu(\omega),\quad \varphi\in\big(\oplus_{\omega \in \Omega}H_{\omega},\mu\big)_{L^2},~h\in H,
\end{align}
is linear and bounded with $\|T\|\leq \sqrt{B}$. Furthermore for each $h \in H$ and $\omega \in \Omega$
\begin{align}
T^{*}(h)(\omega)=\Lambda_{\omega}h.
\end{align}
\end{proposition}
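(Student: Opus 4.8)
The plan is to reproduce, in the $g$-setting, the argument establishing the synthesis operator of a $c$-Bessel mapping (the analogue of Proposition~\ref{TT}): I would first check that for each fixed $\varphi$ the prescription \eqref{TT} defines a bounded conjugate-linear functional of $h\in H$, then read off the vector $T\varphi$ from the Riesz representation theorem, and finally identify $T^{*}$ by comparing the two inner products.

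First I would fix $\varphi\in\big(\oplus_{\omega \in \Omega}H_{\omega},\mu\big)_{L^2}$ and $h\in H$ and note that, since $\varphi$ is strongly measurable and $\{\Lambda_{\omega}h\}_{\omega\in\Omega}$ is strongly measurable, the scalar map $\omega\mapsto\langle\Lambda_{\omega}^{*}\varphi(\omega),h\rangle=\langle\varphi(\omega),\Lambda_{\omega}h\rangle$ is measurable, so the integral in \eqref{TT} is meaningful. The key estimate comes from applying the Cauchy--Schwarz inequality fibrewise and then in $L^{2}(\Omega,\mu)$, together with the Bessel bound:
\begin{align*}
\int_{\Omega}|\langle\varphi(\omega),\Lambda_{\omega}h\rangle|\,d\mu(\omega)
&\le\Big(\int_{\Omega}\|\varphi(\omega)\|^{2}d\mu(\omega)\Big)^{1/2}\Big(\int_{\Omega}\|\Lambda_{\omega}h\|^{2}d\mu(\omega)\Big)^{1/2}\\
&\le\sqrt{B}\,\|\varphi\|\,\|h\|.
\end{align*}
Thus $h\mapsto\int_{\Omega}\langle\Lambda_{\omega}^{*}\varphi(\omega),h\rangle\,d\mu(\omega)$ is a bounded conjugate-linear functional on $H$ of norm at most $\sqrt{B}\,\|\varphi\|$, and the Riesz representation theorem produces a unique $T\varphi\in H$ satisfying \eqref{TT} with $\|T\varphi\|\le\sqrt{B}\,\|\varphi\|$. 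Linearity of $T$ is then immediate from linearity of the integral in $\varphi$, and the same estimate gives $\|T\|\le\sqrt{B}$.

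For the adjoint, I would first observe that for $h\in H$ the section $\omega\mapsto\Lambda_{\omega}h$ lies in $\big(\oplus_{\omega \in \Omega}H_{\omega},\mu\big)_{L^2}$, being strongly measurable with $\int_{\Omega}\|\Lambda_{\omega}h\|^{2}d\mu(\omega)\le B\|h\|^{2}<\infty$. Then for every $\varphi$ in that space,
\begin{align*}
\langle\varphi,T^{*}h\rangle=\langle T\varphi,h\rangle=\int_{\Omega}\langle\Lambda_{\omega}^{*}\varphi(\omega),h\rangle\,d\mu(\omega)=\int_{\Omega}\langle\varphi(\omega),\Lambda_{\omega}h\rangle\,d\mu(\omega),
\end{align*}
and the right-hand side is exactly $\big\langle\varphi,(\omega\mapsto\Lambda_{\omega}h)\big\rangle$ computed in $\big(\oplus_{\omega \in \Omega}H_{\omega},\mu\big)_{L^2}$. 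Since $\varphi$ is arbitrary, $T^{*}(h)(\omega)=\Lambda_{\omega}h$ for $\mu$-almost every $\omega$.

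I expect the only genuinely delicate point to be the measurability bookkeeping in the first step: confirming that $\omega\mapsto\langle\varphi(\omega),\Lambda_{\omega}h\rangle$ is measurable even though the fibres $H_{\omega}$ vary with $\omega$, so that both the integral defining $T\varphi$ and the pairing with $\omega\mapsto\Lambda_{\omega}h$ in the direct-sum Hilbert space are legitimate. Everything after that is a routine combination of Cauchy--Schwarz and the Riesz representation theorem.
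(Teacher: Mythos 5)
Your argument is correct and is exactly the standard one for this result (the paper only cites it from Abdollahpour--Faroughi without reproving it): fibrewise Cauchy--Schwarz plus the $L^2$ Cauchy--Schwarz and the Bessel bound give the conjugate-linear functional estimate, Riesz representation defines $T\varphi$ with $\|T\|\le\sqrt{B}$, and the identification of $T^{*}h$ with the section $\omega\mapsto\Lambda_{\omega}h$ follows since that section lies in $\big(\oplus_{\omega\in\Omega}H_{\omega},\mu\big)_{L^2}$ and represents the same functional. You correctly flag the measurability of $\omega\mapsto\langle\varphi(\omega),\Lambda_{\omega}h\rangle$ as the only point needing care, and it holds because both sections are strongly measurable maps into $\oplus_{\omega\in\Omega}H_{\omega}$.
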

The operators $T$ and $T^*$ are called \emph{synthesis} and \emph{analysis}  operators of $cg$-Bessel family $\{\Lambda_{\omega}\}_{\omega\in \Omega}$, respectively.

\vspace{2mm}
Let $\{\Lambda_{\omega}\}_{\omega\in \Omega}$ be a $cg$-frame  for $H$ with respect to $\{H_\omega\}_{\omega\in \Omega}$
 with frame bounds $A,B$. The operator $S:H\longrightarrow H$ defined by
\begin{align}
\langle Sf,g\rangle=\int_{\Omega}\langle f,\Lambda_{\omega}^{*}\Lambda_{\omega}g \rangle d\mu(\omega),\quad f,g\in H,
\end{align}
is called the \emph{frame operator} of $\{\Lambda_{\omega}\}_{\omega\in \Omega}$ which is a positive  and invertible operator.

Now, we state a known result that is helpful in proving some results.

\begin{proposition}\label{06}
{\rm(\cite{sum2})} Let $K : H \longrightarrow H$ be a bounded linear operator. Then the following hold.
\\$(i)$ $K = \alpha(U_1 + U_2 + U_3)$, where each $U_j,~j = 1, 2, 3$, is a unitary operator and $\alpha$ is a constant.
\\$(ii)$ If $K$ is onto, then it can be written as a linear combination of two unitary operators if and only if $K$ is
invertible.
\end{proposition}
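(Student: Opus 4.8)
The plan is to manufacture the required unitaries using the Borel functional calculus (which turns a self-adjoint operator $X$ with $\|X\|\le 1$ into the mean of two unitaries, $X=\tfrac{1}{2}(W+W^{*})$ with $W=X+i(I-X^{2})^{1/2}$, this $W$ being unitary because $X$ commutes with $(I-X^{2})^{1/2}$, so $W^{*}W=WW^{*}=X^{2}+(I-X^{2})=I$) together with the polar decomposition, and then to recognise invertibility in (ii) via the Neumann series and the normality of operators of the form $I+\text{(unitary)}$. For (i), assuming $K\neq 0$ (the zero case being trivial), I would put $T=K/\|K\|$, so $\|T\|\le 1$, and let $A=\tfrac{1}{2}(T+T^{*})$ and $P=A+i(I-A^{2})^{1/2}$, which is unitary. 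Since $P+P^{*}=2A=T+T^{*}$, the operator $T-P$ is skew-adjoint, so $T-P=iS$ with $S:=-i(T-P)$ self-adjoint and $\|S\|\le\|T\|+\|P\|\le 2$; applying the two-unitary fact to $S/2$ gives a unitary $Q$ with $S=Q+Q^{*}$, and hence $T=P+iS=P+(iQ)+(iQ^{*})$. As $iQ,iQ^{*}$ are unitary, $K=\|K\|\,(P+iQ+iQ^{*})$, so $\alpha=\|K\|$ works. The only non-mechanical point is noticing that $T-P$ is skew-adjoint, which is what reduces the naive four-unitary count to three.

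For the ``if'' direction of (ii), suppose $K$ is invertible and write its polar decomposition $K=U|K|$ with $|K|=(K^{*}K)^{1/2}$; then $U=K|K|^{-1}$ is unitary and $|K|$ is positive and invertible. With $P=|K|/\|K\|$, a positive operator whose spectrum lies in $(0,1]$, the functional-calculus trick produces a unitary $W=P+i(I-P^{2})^{1/2}$ with $P=\tfrac{1}{2}(W+W^{*})$, whence $K=U|K|=\tfrac{\|K\|}{2}(UW)+\tfrac{\|K\|}{2}(UW^{*})$ is a linear combination of the two unitaries $UW,UW^{*}$.

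For the ``only if'' direction, write $K=c_{1}V_{1}+c_{2}V_{2}$ with $V_{1},V_{2}$ unitary and $K$ onto; discarding the trivial cases where some $c_{j}=0$ and relabelling so that $|c_{1}|\ge|c_{2}|$, factor $K=c_{1}V_{1}(I+\lambda W)$ with $\lambda=c_{2}/c_{1}$, $|\lambda|\le 1$, and $W=V_{1}^{*}V_{2}$ unitary. If $|\lambda|<1$, then $I+\lambda W$ is invertible by the Neumann series, so $K$ is invertible. If $|\lambda|=1$, then $\lambda W$ is unitary, so $I+\lambda W$ is normal; since $c_{1}V_{1}$ is invertible and $K$ is onto, $I+\lambda W=\tfrac{1}{c_{1}}V_{1}^{*}K$ is onto, hence has closed range, and for a normal operator surjectivity forces $\ker(I+\lambda W)=\bigl(\,\overline{\mathrm{ran}\,(I+\lambda W)}\,\bigr)^{\perp}=\{0\}$, so $I+\lambda W$ is bijective and $K$ invertible. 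I expect this borderline case $|c_{1}|=|c_{2}|$ to be the main subtlety of the statement: it is precisely here that the hypothesis ``$K$ is onto'' cannot be dropped, an operator $I+\text{(unitary)}$ being normal but not necessarily invertible.
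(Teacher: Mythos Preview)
The paper does not actually prove this proposition: it is stated with a citation to Casazza \cite{sum2} and used as a black box, with no argument given in the present manuscript. So there is no ``paper's own proof'' to compare your attempt against.

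That said, your argument is correct and self-contained. Your route to (i) --- take the self-adjoint part $A=\tfrac12(T+T^{*})$, peel off one unitary $P=A+i(I-A^{2})^{1/2}$, observe $T-P$ is skew-adjoint, then split the remaining self-adjoint piece as a mean of two unitaries --- is clean and gives the sharp constant $\alpha=\|K\|$. Your treatment of (ii) is the standard one (polar decomposition plus the $W=P+i\sqrt{I-P^{2}}$ trick for the ``if'' direction; a Neumann-series / normality dichotomy for the ``only if'' direction), and your handling of the borderline case $|c_{1}|=|c_{2}|$ via normality of $I+\lambda W$ is exactly the right observation. One cosmetic point: in (i) you should say a word about $K=0$ (take $\alpha=0$), and in (ii) you implicitly use that if exactly one $c_{j}$ vanishes then $K$ is a nonzero scalar multiple of a unitary and hence invertible --- worth stating once so the case split is complete.
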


Every  closed-ranged operator
has a \emph{right-inverse} operator in the following sense:

\begin{lemma}\label{pseudo}
{\rm(\cite{chris})} Let $H$ and $K$ be Hilbert spaces, and suppose that $U : K \longrightarrow H$ is a bounded operator with closed range $R(U)$. Then there exists a bounded
operator $U^{\dag}:H\longrightarrow K$ for which
$$UU^{\dag}h = h,\quad h\in R(U).$$
\end{lemma}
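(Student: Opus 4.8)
The plan is to construct $U^{\dag}$ from the inverse of a suitable restriction of $U$. First I would use the orthogonal decomposition $K = N(U)\oplus N(U)^{\perp}$, where $N(U)$ is the kernel of $U$, and introduce the restriction $U_{0}:=U|_{N(U)^{\perp}}:N(U)^{\perp}\longrightarrow R(U)$. This map is linear and bounded as a restriction of $U$; it is injective because the kernel has been discarded; and it is surjective onto $R(U)$, since any $Uk$ with $k\in K$ equals $U$ applied to the $N(U)^{\perp}$-component of $k$. Thus $U_{0}$ is a bounded linear bijection.

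Next I would observe that, because $R(U)$ is assumed closed, it is a closed subspace of the Hilbert space $H$, hence itself a Hilbert space; so $U_{0}$ is a bounded bijection between two Hilbert spaces. The bounded inverse theorem (a corollary of the open mapping theorem) then yields that $U_{0}^{-1}:R(U)\longrightarrow N(U)^{\perp}$ is bounded. I expect this to be the only real obstacle, in the sense that it is the one place where the hypothesis ``$R(U)$ closed'' is indispensable: without completeness of $R(U)$ the open mapping theorem does not apply and $U_{0}^{-1}$ may fail to be bounded.

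Finally I would define $U^{\dag}:H\longrightarrow K$ by means of the orthogonal decomposition $H = R(U)\oplus R(U)^{\perp}$: for $h = h_{1}+h_{2}$ with $h_{1}\in R(U)$ and $h_{2}\in R(U)^{\perp}$, set $U^{\dag}h := U_{0}^{-1}h_{1}$. Equivalently, $U^{\dag} = U_{0}^{-1}P$, where $P$ is the orthogonal projection of $H$ onto $R(U)$. Then $U^{\dag}$ is linear, and bounded with $\|U^{\dag}\|\le\|U_{0}^{-1}\|$ since $\|P\|\le 1$. For $h\in R(U)$ we have $U^{\dag}h = U_{0}^{-1}h\in N(U)^{\perp}$, and since $U$ agrees with $U_{0}$ on $N(U)^{\perp}$ it follows that $UU^{\dag}h = U_{0}U_{0}^{-1}h = h$, which is exactly the claimed identity. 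The only point meriting an explicit line in the write-up is this last observation that $U$ and $U_{0}$ coincide on $N(U)^{\perp}$, which is immediate from the definition of $U_{0}$ as a restriction.
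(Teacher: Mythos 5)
Your proof is correct and is essentially the standard construction from the cited reference (Christensen): restrict $U$ to $N(U)^{\perp}$ to obtain a bounded bijection $U_{0}$ onto the closed (hence complete) subspace $R(U)$, invoke the bounded inverse theorem, and set $U^{\dag}=U_{0}^{-1}P$ with $P$ the orthogonal projection onto $R(U)$. The paper states this lemma without proof, simply citing \cite{chris}, and your argument correctly isolates the closedness of $R(U)$ as the hypothesis that makes the open mapping theorem applicable.
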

The operator $U^{\dag}$ is called the \emph{pseudo-inverse} of $U$.

\section{$cg$-Orthonormal bases}

Similar to the continuous frames, we want to generalize orthonormal bases. Indeed, our purpose here is to define  a mapping $f:\Omega\longrightarrow H$  that has  similar properties to an orthonormal basis of $H$.
\begin{definition}
Suppose $(\Omega,\mu)$ is a measure space. A mapping $f:\Omega\longrightarrow H$ is called  a \emph{$c$-orthonormal basis}  with respect to $(\Omega,\mu)$ for $H$, if:
\\$(i)$ For each $h\in H$, $\omega\longmapsto \langle h,f(\omega)\rangle$ is measurable,
\\$(ii)$ for almost all $\nu\in \Omega$, $$\int_{\Omega}\langle f(\omega),f(\nu)\rangle d\mu(\omega)=1,$$
\\$(ii)$ for each $h\in H$, $\int_{\Omega}|\langle h,f(\omega)\rangle|^{2}d\mu(\omega)=\|h\|^{2}$.
\end{definition}

Now we define generalization of orthonormal basis in case of operators.

\begin{definition}
Assume $(\Omega,\mu)$ is a measure space. A family of operators $\Lambda=\{\Lambda_{\omega}\in B(H,H_\omega):  \omega \in\Omega\}$ is called a \emph{continuous $g$-orthonormal basis} or simply a \emph{ $cg$-orthonormal basis},  for $H$ with respect to $\{H_\omega\}_{\omega \in\Omega}$, whenever:
\\$(i)$ For each $h\in H$, $\{\Lambda_{\omega}h\}_{\omega \in\Omega}$ is strongly measurable,
\\$(ii)$ for almost all $\nu\in \Omega$,
$$\int_{\Omega}\langle \Lambda^{*}_{\omega}f_{\omega},\Lambda^{*}_{\nu}g_{\nu}\rangle d\mu(\omega)=\langle f_{\nu},g_{\nu}\rangle, ~\,\{f_{\omega}\}_{\omega \in\Omega} \in (\oplus_{\omega \in\Omega}H_\omega,\mu)_{L^{2}}, ~g_{\nu}\in H_\nu,$$
\\$(iii)$ for each $h\in H$, $\int_{\Omega}\|\Lambda_{\omega}h\|^{2} d\mu(\omega)=\|h\|^{2}$.

\vspace{2mm}
If only conditions $(i)$ and $(ii)$ hold,  $\Lambda=\{\Lambda_{\omega}\in B(H,H_\omega); \omega \in\Omega\}_{\omega \in\Omega}$ is called a \emph{$cg$-orthonormal system}  for $H$ with respect to $\{H_\omega\}_{\omega \in\Omega}$.
\end{definition}

\begin{example}
Suppose that $\Omega=\{a,b,c\}$, $\Sigma=\big\{\emptyset, \{a,b\},\{c\},\Omega\big\}$ and $\mu:\Sigma\longrightarrow [0,\infty]$ is a measure such that $\mu(\emptyset)=0$, $\mu(\{a,b\})=1$,  $\mu(\{c\})=1$ and $\mu(\Omega)=2$. Let $H$ be a $2$ dimensional Hilbert space with an orthonormal basis $\{e_1,e_2\}$. We define
$$f:\Omega\longrightarrow H$$
by
 $f=e_1\chi_{\{a,b\}}+ e_2\chi_{\{c\}}$.
So for each $h\in H$,
 $$\langle f(\omega),h\rangle=\langle e_1,h\rangle\chi_{\{a,b\}}(\omega)+\langle e_2,h\rangle\chi_{\{c\}}(\omega), \quad \omega\in \Omega,$$
hence $\omega\longmapsto \langle h,f(\omega)\rangle$ is measurable.
Now, for each $\omega\in \Omega$, we define
$$\Lambda_\omega:H \longrightarrow \mathbb{C}$$
$$\Lambda_\omega(h)=\langle h,f(\omega)\rangle.$$
Actually, we consider for each  $\omega\in \Omega$, $H_\omega=\mathbb{C}$. By an easy calculation, we have
$$\Lambda_\omega^*(z)=f(\omega)z, ~~ z \in  \mathbb{C}.$$
For any $\nu\in \Omega$, $x_{\nu} \in \mathbb{C}$  and any $\{z_{\omega}\}_{\omega \in\Omega} \in (\oplus_{\omega \in\Omega}H_\omega,\mu)_{L^{2}}=L^2(\Omega,\mu)$, due to the Example 4.2 in \cite{Ra}, we have
$$\int_{\Omega}\langle \Lambda^{*}_{\omega}z_{\omega},\Lambda^{*}_{\nu}x_{\nu}\rangle d\mu(\omega)=\int_{\Omega} z_\omega \overline{x_\nu} \langle f(\omega),f(\nu)\rangle d\mu(\omega)=z_\nu \overline{x_\nu}.$$
Also  for each $h\in H$,
$$\int_{\Omega}\|\Lambda_{\omega}h\|^{2} d\mu(\omega)=\int_{\Omega}|\langle h,f(\omega)\rangle|^{2}d\mu(\omega)=\|h\|^{2}.$$
Therefore $\{\Lambda_\omega\}_{\omega \in \Omega}$ is a $cg$-orthonormal basis for $H$ with respect to $\{H_\omega\}_{\omega \in\Omega}$, where for each  $\omega\in \Omega$, $H_\omega=\mathbb{C}$.
\end{example}
 We present some equal conditions for  $cg$-orthonormal bases.
\begin{theorem}
Let $\{\Lambda_\omega\}_{\omega \in \Omega}$ be a $cg$-orthonormal system for $H$ with respect to $\{H_\omega\}_{\omega \in\Omega}$. Also assume that for each $h\in H$,  $\int_{\Omega}\|\Lambda_{\omega}h\|^{2} d\mu(\omega)< \infty$.
Then the following conditions are equivalent:
\\$(i)$ $\{\Lambda_\omega\}_{\omega \in \Omega}$ is a $cg$-orthonormal basis for $H$ with respect to $\{H_\omega\}_{\omega \in\Omega}$.
\\$(ii)$ For each $h, k \in H$,
$$\langle h,k \rangle=\int_{\Omega} \langle \Lambda_\omega h, \Lambda_\omega k \rangle d\mu(\omega).$$
\\$(iii)$ If  $\Lambda_\omega h=0$, $a.e.~ [\mu]$, then $h=0$.
\\$(\upsilon)$ For each zero measure  set $\Omega_0\subseteq \Omega$, $H=\overline{span}\{\Lambda_\omega^*(H_\omega)\}_{\omega \in \Omega \backslash \Omega_0}$.
\end{theorem}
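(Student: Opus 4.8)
The plan is to run all four conditions through the synthesis/analysis operators of the family and collapse each of them to the single operator identity $TT^{*}=\mathrm{Id}_{H}$. First I would record that the standing hypothesis $\int_{\Omega}\|\Lambda_{\omega}h\|^{2}\,d\mu(\omega)<\infty$ $(h\in H)$ already forces $\{\Lambda_{\omega}\}_{\omega\in\Omega}$ to be a $cg$-Bessel family: the linear map $Uh:=\{\Lambda_{\omega}h\}_{\omega\in\Omega}$ is everywhere defined from $H$ into the Hilbert space $\big(\oplus_{\omega\in\Omega}H_{\omega},\mu\big)_{L^{2}}$, and it is closed (if $h_{n}\to h$ and $Uh_{n}\to\psi$, an $L^{2}$-convergent subsequence converges pointwise a.e., while boundedness of each $\Lambda_{\omega}$ gives $\psi(\omega)=\Lambda_{\omega}h$ a.e.), so $U$ is bounded by the closed graph theorem and $\int_{\Omega}\|\Lambda_{\omega}h\|^{2}\,d\mu(\omega)=\|Uh\|^{2}\le\|U\|^{2}\|h\|^{2}$. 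Then Proposition~\ref{Tg} supplies the synthesis operator $T:\big(\oplus_{\omega\in\Omega}H_{\omega},\mu\big)_{L^{2}}\longrightarrow H$ with $T^{*}h=\{\Lambda_{\omega}h\}_{\omega\in\Omega}$. Testing the weak definition of $T$ at $h=\Lambda_{\nu}^{*}g_{\nu}$ gives $\int_{\Omega}\langle\Lambda_{\omega}^{*}\varphi(\omega),\Lambda_{\nu}^{*}g_{\nu}\rangle\,d\mu(\omega)=\langle\Lambda_{\nu}T\varphi,g_{\nu}\rangle=\langle(T^{*}T\varphi)(\nu),g_{\nu}\rangle$, so the $cg$-orthonormal \emph{system} hypothesis says exactly that for a.e.\ $\nu$ this equals $\langle\varphi(\nu),g_{\nu}\rangle$ for every $g_{\nu}\in H_{\nu}$, i.e.\ $T^{*}T\varphi=\varphi$ for all $\varphi$; hence $T^{*}T=\mathrm{Id}$. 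In particular $T$ is an isometry, $R(T)$ is a closed subspace of $H$, $TT^{*}$ is the orthogonal projection of $H$ onto $R(T)$, and therefore $TT^{*}=\mathrm{Id}_{H}\Longleftrightarrow R(T)=H$.

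Next I would match the conditions. For $(i)$: being a $cg$-orthonormal basis adds to the system property only the requirement $\int_{\Omega}\|\Lambda_{\omega}h\|^{2}\,d\mu(\omega)=\|h\|^{2}$ for all $h$, and since the left side is $\|T^{*}h\|^{2}=\langle TT^{*}h,h\rangle$, this says $\langle TT^{*}h,h\rangle=\langle h,h\rangle$ for all $h$, i.e.\ (polarization) $TT^{*}=\mathrm{Id}_{H}$. For $(ii)$: $\langle TT^{*}h,k\rangle=\langle T^{*}h,T^{*}k\rangle=\int_{\Omega}\langle\Lambda_{\omega}h,\Lambda_{\omega}k\rangle\,d\mu(\omega)$, so $(ii)$ holding for all $h,k$ is literally $TT^{*}=\mathrm{Id}_{H}$. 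For $(iii)$: $T^{*}h=0$ iff $\Lambda_{\omega}h=0$ a.e., and $\ker T^{*}=R(T)^{\perp}$, so $(iii)$ asserts $R(T)^{\perp}=\{0\}$, hence $\overline{R(T)}=H$, hence (closedness of $R(T)$) $R(T)=H$. Thus $(i)\Leftrightarrow(ii)\Leftrightarrow(iii)$.

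For $(\upsilon)$, fix a null set $\Omega_{0}$ and put $V_{\Omega_{0}}:=\overline{span}\{\Lambda_{\omega}^{*}(H_{\omega}):\omega\in\Omega\setminus\Omega_{0}\}$. Then $h\perp V_{\Omega_{0}}$ iff $\langle\Lambda_{\omega}h,g\rangle=\langle h,\Lambda_{\omega}^{*}g\rangle=0$ for all $\omega\notin\Omega_{0}$ and $g\in H_{\omega}$, iff $\Lambda_{\omega}h=0$ for every $\omega\in\Omega\setminus\Omega_{0}$, and since $\Omega_{0}$ is null this puts $h\in\ker T^{*}$; so $V_{\Omega_{0}}^{\perp}\subseteq\ker T^{*}$ for every null $\Omega_{0}$. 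Hence if $(iii)$ holds then $V_{\Omega_{0}}^{\perp}=\{0\}$ and, as $V_{\Omega_{0}}$ is closed, $V_{\Omega_{0}}=H$, which is $(\upsilon)$. Conversely, if $(\upsilon)$ holds and $\Lambda_{\omega}h=0$ a.e., set $\Omega_{0}:=\{\omega:\Lambda_{\omega}h\neq0\}$; this is null and $h\in V_{\Omega_{0}}^{\perp}=\{0\}$ by $(\upsilon)$, so $h=0$, which is $(iii)$. This closes the cycle.

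The step I expect to be the main obstacle is the passage from the pointwise-in-$\nu$ identity in the definition of $cg$-orthonormal system to the clean equality $T^{*}T=\mathrm{Id}$: one must check that the exceptional null set may be taken independently of the test elements $\varphi$ and $g_{\nu}$ (which is harmless by separability of the $H_{\omega}$) and that the weak definition of $T$ from Proposition~\ref{Tg} may legitimately be specialized to $h=\Lambda_{\nu}^{*}g_{\nu}$. Once $T^{*}T=\mathrm{Id}$ and the Bessel bound are in hand, everything else is routine Hilbert-space operator theory — isometries, orthogonal projections onto closed ranges, and the elementary fact that $\langle Ah,h\rangle=\langle h,h\rangle$ for all $h$ forces $A=\mathrm{Id}$.
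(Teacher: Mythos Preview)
Your proof is correct and takes a genuinely different route from the paper. The paper argues the cycle $(i)\Leftrightarrow(ii)\Rightarrow(iii)\Rightarrow(\upsilon)\Rightarrow(ii)$ by direct computation: $(i)\Leftrightarrow(ii)$ via the frame operator being the identity, $(ii)\Rightarrow(iii)$ and $(iii)\Rightarrow(\upsilon)$ by straightforward orthogonality arguments, and $(\upsilon)\Rightarrow(ii)$ by showing, for fixed $k$, that the closed subspace $\mathcal{H}_k=\{h:\langle h,k\rangle=\int_{\Omega}\langle\Lambda_{\omega}h,\Lambda_{\omega}k\rangle\,d\mu(\omega)\}$ contains every $\Lambda_{\nu}^{*}\Lambda_{\nu}f$ (using the system identity at $\nu$), and then that any $f\perp\mathcal{H}_k$ satisfies $\Lambda_{\nu}f=0$ a.e., whence $f=0$ by $(\upsilon)$. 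Your approach instead identifies the $cg$-orthonormal system condition with the single operator identity $T^{*}T=\mathrm{Id}$ and then recognizes each of $(i)$--$(\upsilon)$ as equivalent to the surjectivity of the isometry $T$, i.e.\ to $TT^{*}=\mathrm{Id}$. This buys a cleaner conceptual picture and a uniform mechanism for all four equivalences; it also makes transparent why closedness of $R(T)$ (hence the jump from $\ker T^{*}=\{0\}$ to $R(T)=H$) is available, a point the paper's cycle handles only implicitly. The paper's argument, on the other hand, avoids the closed graph theorem and works essentially at the level of the defining integrals; its $(\upsilon)\Rightarrow(ii)$ step exploits the system identity in a hands-on way that does not pass through the synthesis operator at all. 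One minor advantage of your setup is that the Bessel bound you extract via closed graph justifies the continuity step $h_n\to h\Rightarrow\int\langle\Lambda_{\omega}h_n,\Lambda_{\omega}k\rangle\,d\mu\to\int\langle\Lambda_{\omega}h,\Lambda_{\omega}k\rangle\,d\mu$ rigorously (via Cauchy--Schwarz in $(\oplus H_{\omega},\mu)_{L^2}$), whereas the paper appeals to dominated convergence without exhibiting a dominating function.
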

\begin{proof}
$(i)  \Longleftrightarrow  (ii)$ Since $\{\Lambda_\omega\}_{\omega \in \Omega}$ is a Parseval $cg$-frame for $H$, so its frame operator $S_\Lambda=I$. Hence $(ii)$  is obvious. The converse side clearly holds.
\\ $(ii)  \Rightarrow  (iii)$  If  $\Lambda_\omega h=0$, $a.e.~ [\mu]$, then for every $k \in H$,
$$\langle h,k \rangle= \int_{\Omega} \langle \Lambda_\omega h, \Lambda_\omega k \rangle d\mu(\omega)=0.$$
Therefore $h=0$.
\\ $(iii)  \Rightarrow  (\upsilon)$ Suppose that  $\Omega_0\subseteq \Omega$ and $h\perp\overline{span}\{\Lambda_\omega^*(H_\omega)\}_{\omega \in \Omega \backslash \Omega_0}$, so for almost all $\omega\in \Omega$, $\langle \Lambda_\omega^* \Lambda_\omega h, h \rangle=0$. Then   $\|\Lambda_\omega h\|^2=0$, $a.e.~ [\mu]$, which implies $h=0$.
\\ $(\upsilon)  \Rightarrow  (ii)$ Let $k\in H$. Assume that
$$\mathcal{H}_k=\Big\{h\in H : \langle h,k\rangle= \int_{\Omega}\langle\Lambda_\omega h,\Lambda_\omega k\rangle d\mu(\omega)\Big\}.$$
$\mathcal{H}_k$ is a subspace of $H$. Also, it is closed, since if $\lim_{n\rightarrow \infty}h_n=h$, where $h_n$'s belong to $\mathcal{H}_k$, then
$$\langle h,k \rangle=\lim_{n\rightarrow \infty} \langle h_n,k \rangle=\lim_{n\rightarrow \infty} \int_{\Omega}\langle\Lambda_\omega h_n,\Lambda_\omega k\rangle d\mu(\omega). $$
According to assumption,
\begin{align*}
\int_{\Omega}|\langle\Lambda_\omega h,\Lambda_\omega k\rangle| d\mu(\omega)\leq \Big(\int_{\Omega}\|\Lambda_\omega h\|^2 d\mu(\omega)\Big)^{\frac{1}{2}}\Big(\int_{\Omega}\|\Lambda_\omega g\|^2 d\mu(\omega)\Big)^{\frac{1}{2}}<\infty.
\end{align*}
So by Lebesgue's Dominated Convergence Theorem,
$$\lim_{n\rightarrow \infty} \int_{\Omega}\langle\Lambda_\omega h_n,\Lambda_\omega k\rangle d\mu(\omega)=
 \int_{\Omega} \langle\Lambda_\omega h,\Lambda_\omega k\rangle d\mu(\omega),$$
which means  $h\in \mathcal{H}_k$.
For almost all $\nu \in \Omega$ and each $f\in H$, we have
\begin{align*}
\int_{\Omega}\langle\Lambda_\omega \Lambda^*_\nu \Lambda_\nu f,\Lambda_\omega k \rangle d\mu(\omega)
&=\int_{\Omega}\langle \Lambda^*_\nu \Lambda_\nu f,\Lambda_\omega^* \Lambda_\omega k \rangle d\mu(\omega)
\\&=\langle \Lambda_\nu f, \Lambda_\nu k\rangle
=\langle \Lambda_\nu^*\Lambda_\nu f,k\rangle.
\end{align*}
Therefore  $\Lambda^*_\nu \Lambda_\nu f \in \mathcal{H}_k$.
Assume  $f\perp \mathcal{H}_k$, then for almost all $\nu \in \Omega$,
$$0=\langle \Lambda^*_\nu \Lambda_\nu f, f\rangle=\|\Lambda_\nu f\|^2,$$
which gives $\Lambda_\nu f=0$.  For almost all $\nu \in \Omega$ and any $g_\nu \in H_\nu$,
$$\langle f, \Lambda^*_\nu g_\nu\rangle=\langle \Lambda_\nu f, g_\nu \rangle=0.$$
So $f\perp\overline{span}\{\Lambda_\omega^*(H_\omega)\}_{\omega \in \Omega \backslash \Omega_0}$, where $\Omega_0$
is a zero measure subset of $\Omega$. By assumption  $(\upsilon)$, $\overline{span}\{\Lambda_\omega^*(H_\omega)\}_{\omega \in \Omega \backslash \Omega_0}=H$. Thus $f=0$ and $\mathcal{H}_k=H$. Therefore
$$\langle h,k \rangle=\int_{\Omega} \langle \Lambda_\omega h, \Lambda_\omega k \rangle d\mu(\omega),\quad h,k \in H.$$
\end{proof}
In the following,  suppose that there exists a $cg$-orthonormal basis for $H$.

\begin{proposition}\label{t1}
Suppose that $\{\Theta_\omega\}_{\omega \in \Omega}$ is a $cg$-orthonormal basis for $H$ with respect to $\{H_\omega\}_{\omega \in\Omega}$ and $\{\Lambda_{\omega}\in B(H,H_\omega):  \omega \in\Omega\}$ is a family such that for each $h\in H$, $\{\Lambda_\omega h\}_{\omega \in \Omega}$ is strongly measurable. Then $\{\Lambda_\omega\}_{\omega \in \Omega}$ is a Parseval $cg$-frame for $H$ if and only if there exists a unique isometry $V \in B(H)$ such that $\Lambda_\omega=\Theta_\omega V$, $a.e.~ [\mu]$.
\end{proposition}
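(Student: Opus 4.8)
The plan is to mimic the classical discrete argument characterizing Parseval frames as images of an orthonormal basis under a co-isometry, transported to the continuous $g$-setting via the synthesis/analysis operators of Proposition~\ref{Tg}. Write $T_\Theta$ and $T_\Lambda$ for the synthesis operators (when defined) and $T_\Theta^*, T_\Lambda^*$ for the analysis operators, so that $T_\Theta^*(h)(\omega)=\Theta_\omega h$ and (once we know $\{\Lambda_\omega\}$ is $cg$-Bessel) $T_\Lambda^*(h)(\omega)=\Lambda_\omega h$. The key observation is that, since $\{\Theta_\omega\}$ is a $cg$-orthonormal basis, its frame operator is the identity, hence $T_\Theta T_\Theta^*=I$; moreover condition $(ii)$ in the definition of a $cg$-orthonormal basis says precisely that $T_\Theta^*$ is an isometry onto a closed subspace of $\big(\oplus_{\omega\in\Omega}H_\omega,\mu\big)_{L^2}$, so $T_\Theta^* T_\Theta$ is the orthogonal projection onto $R(T_\Theta^*)$ and $T_\Theta$ is a co-isometry (a partial isometry with initial space $R(T_\Theta^*)$).

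For the ``if'' direction, suppose $\Lambda_\omega=\Theta_\omega V$ a.e.\ for some isometry $V\in B(H)$. Then for every $h\in H$,
\begin{align*}
\int_\Omega\|\Lambda_\omega h\|^2\,d\mu(\omega)=\int_\Omega\|\Theta_\omega(Vh)\|^2\,d\mu(\omega)=\|Vh\|^2=\|h\|^2,
\end{align*}
using condition $(iii)$ of the $cg$-orthonormal basis applied to $Vh$ and the isometry property of $V$; strong measurability of $\{\Lambda_\omega h\}$ is assumed. Hence $\{\Lambda_\omega\}$ is a Parseval $cg$-frame.

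For the ``only if'' direction, assume $\{\Lambda_\omega\}$ is a Parseval $cg$-frame. Then it is $cg$-Bessel, so $T_\Lambda$ is well defined and bounded and $T_\Lambda^*(h)(\omega)=\Lambda_\omega h$ with $T_\Lambda T_\Lambda^*=S_\Lambda=I$. Define $V:=T_\Theta T_\Lambda^*:H\longrightarrow H$. I would first check $\Lambda_\omega=\Theta_\omega V$ a.e.: for $h,k\in H$, writing things out via \eqref{TT},
\begin{align*}
\langle Vh,k\rangle=\langle T_\Theta T_\Lambda^* h,k\rangle=\int_\Omega\langle\Theta_\omega^*(\Lambda_\omega h),k\rangle\,d\mu(\omega)=\int_\Omega\langle\Lambda_\omega h,\Theta_\omega k\rangle\,d\mu(\omega),
\end{align*}
and then comparing with the expansion $\langle h',k\rangle=\int_\Omega\langle\Theta_\omega h',\Theta_\omega k\rangle\,d\mu(\omega)$ valid for the $cg$-orthonormal basis, applied with $h'=Vh$, one gets $\int_\Omega\langle\Theta_\omega(Vh)-\Lambda_\omega h,\Theta_\omega k\rangle\,d\mu(\omega)=0$ for all $k$; combined with $T_\Theta T_\Theta^*=I$ this forces $\Theta_\omega(Vh)=\Lambda_\omega h$ a.e.\ for each $h$. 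Next, $V$ is an isometry: $\|Vh\|^2=\langle T_\Theta T_\Lambda^*h,T_\Theta T_\Lambda^*h\rangle$ and, using that $T_\Theta^*T_\Theta$ is the projection onto $R(T_\Theta^*)$ together with the fact that $T_\Lambda^*h$ lies in $R(T_\Theta^*)$ (since each coordinate $\Lambda_\omega h=\Theta_\omega(Vh)$ equals a coordinate of $T_\Theta^*(Vh)$), we get $\|Vh\|^2=\|T_\Lambda^*h\|^2=\langle T_\Lambda T_\Lambda^*h,h\rangle=\langle h,h\rangle=\|h\|^2$. Finally, uniqueness: if $W$ is another isometry with $\Theta_\omega W=\Lambda_\omega$ a.e., then $\Theta_\omega(W-V)h=0$ a.e.\ for every $h$, and since $\{\Theta_\omega\}$ is a $cg$-orthonormal basis condition $(iii)$ (equivalently part $(iii)$ of the preceding theorem) gives $(W-V)h=0$.

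The main obstacle I anticipate is the careful handling of the ``a.e.\ in $\omega$'' quantifiers when passing from the weak identity $\int_\Omega\langle\Theta_\omega(Vh)-\Lambda_\omega h,\Theta_\omega k\rangle\,d\mu(\omega)=0$ for all $k$ to the pointwise (a.e.) equality $\Theta_\omega(Vh)=\Lambda_\omega h$, and in particular verifying that the exceptional null set can be chosen independently of $k$ and then, with a bit more care using separability of $H$, that one obtains a single isometry $V$ rather than an $h$-dependent family of relations. This is exactly where $T_\Theta T_\Theta^*=I$ (surjectivity of $T_\Theta$) is used: it guarantees that the map $\varphi\mapsto T_\Theta\varphi$ applied to the difference $T_\Theta^*(Vh)-T_\Lambda^*h$, which is orthogonal to $R(T_\Theta^*)$ being forced to be the zero of that subspace, actually vanishes as an element of $\big(\oplus_{\omega\in\Omega}H_\omega,\mu\big)_{L^2}$, i.e.\ a.e.
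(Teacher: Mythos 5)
Your overall plan coincides with the paper's: the ``if'' direction by the Parseval identity for $\{\Theta_\omega\}$, the operator $V=T_\Theta T_\Lambda^*$ defined weakly in the ``only if'' direction, and uniqueness via condition $(iii)$ of the definition of a $cg$-orthonormal basis. The ``if'' direction and the uniqueness argument are correct as written. There is, however, a genuine gap at the central step of the ``only if'' direction, where you pass from the weak identity to $\Theta_\omega(Vh)=\Lambda_\omega h$ a.e.

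Setting $\varphi_h(\omega)=\Theta_\omega(Vh)-\Lambda_\omega h$, what you actually establish is $\langle T_\Theta\varphi_h,k\rangle=\int_\Omega\langle\varphi_h(\omega),\Theta_\omega k\rangle\,d\mu(\omega)=0$ for all $k\in H$, i.e.\ $T_\Theta\varphi_h=0$, i.e.\ $\varphi_h\perp R(T_\Theta^*)$. The identity $T_\Theta T_\Theta^*=I$ only makes $T_\Theta$ surjective; its kernel is $R(T_\Theta^*)^\perp$, which need not be trivial, so ``$T_\Theta\varphi_h=0$'' does not force $\varphi_h=0$. To conclude you would need $\varphi_h\in R(T_\Theta^*)$, i.e.\ $T_\Lambda^*h\in R(T_\Theta^*)$ --- but your only justification for that membership is the relation $\Lambda_\omega h=\Theta_\omega(Vh)$, which is exactly what is being proved, so the argument is circular at this point. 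The source of the trouble is a misreading of condition $(ii)$ of the definition of a $cg$-orthonormal basis: it is not merely the statement that $T_\Theta^*$ is an isometry with closed range (that is the content of condition $(iii)$); it is the a.e.-pointwise reproducing identity $\Theta_\nu T_\Theta f=f(\nu)$ for a.e.\ $\nu$ and every $f\in\big(\oplus_{\omega\in\Omega}H_\omega,\mu\big)_{L^2}$, equivalently $T_\Theta^*T_\Theta=I$ on the whole direct-integral space. That stronger property is precisely what closes the gap, and it is how the paper argues: since $\{\Lambda_\omega h\}_{\omega}$ lies in $\big(\oplus_{\omega\in\Omega}H_\omega,\mu\big)_{L^2}$ by the Bessel bound, condition $(ii)$ applied with $f_\omega=\Lambda_\omega h$ and an arbitrary $g_\nu\in H_\nu$ gives, for a.e.\ $\nu$,
\begin{equation*}
\langle\Theta_\nu Vh,g_\nu\rangle=\langle Vh,\Theta_\nu^*g_\nu\rangle=\int_\Omega\langle\Theta_\omega^*\Lambda_\omega h,\Theta_\nu^*g_\nu\rangle\,d\mu(\omega)=\langle\Lambda_\nu h,g_\nu\rangle,
\end{equation*}
hence $\Theta_\nu Vh=\Lambda_\nu h$ a.e. With this replacement the remainder of your proof (the isometry computation and the uniqueness step) goes through essentially as the paper's does.
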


\begin{proof}
Let $\{\Lambda_\omega\}_{\omega \in \Omega}$ is a Parseval $cg$-frame  for $H$. We define the operator $V$ weakly by
$$\langle Vf,h\rangle=\int_{\Omega}\langle\Theta^{*}_{\omega}\Lambda_{\omega}f,h\rangle d\mu(\omega),\quad f,h \in H.$$
For each $f,h\in H$, we have
\begin{align*}
|\langle Vf,h\rangle|\leq \Big(\int_{\Omega}\|\Lambda_\omega f\|^2d\mu(\omega)\Big)^{\frac{1}{2}} \Big(\int_{\Omega}\|\Theta_\omega h\|^2d\mu(\omega)\Big)^{\frac{1}{2}}
\leq \|f\| \|h\|.
\end{align*}
So $V$ is well-defined and bounded.
For almost all $\nu \in \Omega$ and each $f \in H$, $h_\nu\in H_\nu$,
\begin{align*}
\langle \Theta_\nu Vf, h\rangle=\langle Vf,\Theta^{*}_{\nu}h\rangle=\int_{\Omega}\langle \Theta^*_\omega\Lambda_\omega f,\Theta^*_\nu h\rangle d\mu(\omega)=\langle\Lambda_\nu f, h\rangle.
\end{align*}
Since $\{\Theta_\omega\}_{\omega \in \Omega}$ is a $cg$-orthonormal basis, thus $\Lambda_\omega=\Theta_\omega V$, $a.e.~ [\mu]$.
\\For each $f\in H$,
\begin{align*}
\|Vf\|^2=\langle Vf, Vf\rangle&=\int_{\Omega}\langle \Theta^*_\omega\Lambda_\omega f ,Vf\rangle d\mu(\omega)=\int_{\Omega}\langle \Lambda_\omega f,\Theta_{\omega}Vf\rangle d\mu(\omega)
\\&=\int_{\Omega}\langle \Lambda_\omega f ,\Lambda_\omega f\rangle d\mu(\omega)=\|f\|^{2}.
\end{align*}
Therefore $V$ is an isometry.

Now, let $V_1$ and $V_2$ be two isometries such that $\Lambda_\omega=\Theta_\omega V_1$, $a.e.~ [\mu]$ and $\Lambda_\omega=\Theta_\omega V_2$, $a.e.~ [\mu]$.
Then for each $f \in H$, $\Theta_\omega((V_1-V_2)f)=0$, $a.e.~ [\mu]$, which implies
$$0=\int_{\Omega}\|\Theta_\omega(V_1-V_2)f\|^2d\mu(\omega)=\|(V_1-V_2)f\|^2,$$
so $V_1f=V_2f$.

Conversely, let $V \in B(H)$ be a unique isometry such that $\Lambda_\omega=\Theta_\omega V$, $a.e.~ [\mu]$.
For any $f\in H$,
$$\int_{\Omega}\|\Lambda_{\omega}f\|^{2} d\mu(\omega)=\int_{\Omega}\|\Theta_{\omega}Vf\|^{2} d\mu(\omega)=\|Vf\|^{2}=\|f\|^{2}.$$
Hence $\{\Lambda_\omega\}_{\omega \in \Omega}$ is a Parseval $cg$-frame for $H$.
\end{proof}

\begin{theorem}\label{ob1}
Assume  $\{\Theta_\omega\}_{\omega \in \Omega}$ is a $cg$-orthonormal basis for $H$ with respect to $\{H_\omega\}_{\omega \in\Omega}$ and $\{\Lambda_{\omega}\in B(H,H_\omega):  \omega \in\Omega\}$ is a family such that for each $h\in H$, $\{\Lambda_\omega h\}_{\omega \in \Omega}$ is strongly measurable. Then $\{\Lambda_\omega\}_{\omega \in \Omega}$ is a $cg$-frame for $H$ with bounds $A$ and $B$ if and only if there exists a unique $V \in B(H)$ such that $\Lambda_\omega=\Theta_\omega V$, $a.e.~ [\mu]$ and $AI\leq V^*V\leq BI$.
\end{theorem}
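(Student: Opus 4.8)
The plan is to follow the template of Proposition \ref{t1}, upgrading its conclusion ``$V$ is an isometry'' to the two-sided estimate $AI\leq V^*V\leq BI$. For the forward implication, assume $\{\Lambda_\omega\}_{\omega\in\Omega}$ is a $cg$-frame with bounds $A,B$ and define $V\in B(H)$ weakly by $\langle Vf,h\rangle=\int_{\Omega}\langle\Theta_\omega^*\Lambda_\omega f,h\rangle\,d\mu(\omega)$. First I would verify $V$ is well defined and bounded: since $\{\Theta_\omega\}$ is in particular a Parseval $cg$-frame and $\{\Lambda_\omega\}$ is $cg$-Bessel, Cauchy--Schwarz gives $|\langle Vf,h\rangle|\leq\big(\int_{\Omega}\|\Lambda_\omega f\|^2 d\mu(\omega)\big)^{1/2}\big(\int_{\Omega}\|\Theta_\omega h\|^2 d\mu(\omega)\big)^{1/2}\leq\sqrt{B}\,\|f\|\,\|h\|$; in particular this records that $\{\Lambda_\omega f\}_{\omega\in\Omega}\in(\oplus_{\omega\in\Omega}H_\omega,\mu)_{L^2}$, which is exactly what is needed to invoke condition $(ii)$ of the $cg$-orthonormal basis.

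Next I would show $\Lambda_\omega=\Theta_\omega V$, $a.e.~[\mu]$: for almost every $\nu$, each $f\in H$ and $h_\nu\in H_\nu$, identity $(ii)$ of $\{\Theta_\omega\}$ applied to $\{f_\omega\}=\{\Lambda_\omega f\}$ yields $\langle\Theta_\nu Vf,h_\nu\rangle=\langle Vf,\Theta_\nu^* h_\nu\rangle=\int_{\Omega}\langle\Theta_\omega^*\Lambda_\omega f,\Theta_\nu^* h_\nu\rangle\,d\mu(\omega)=\langle\Lambda_\nu f,h_\nu\rangle$, so $\Lambda_\nu=\Theta_\nu V$ as bounded operators for a.e. fixed $\nu$. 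Then the key computation is
\[
\langle V^*Vf,f\rangle=\|Vf\|^2=\int_{\Omega}\langle\Theta_\omega^*\Lambda_\omega f,Vf\rangle\,d\mu(\omega)=\int_{\Omega}\langle\Lambda_\omega f,\Theta_\omega Vf\rangle\,d\mu(\omega)=\int_{\Omega}\|\Lambda_\omega f\|^2\,d\mu(\omega),
\]
so the $cg$-frame inequality $A\|f\|^2\leq\int_{\Omega}\|\Lambda_\omega f\|^2 d\mu(\omega)\leq B\|f\|^2$ translates directly into $AI\leq V^*V\leq BI$. Uniqueness follows as in Proposition \ref{t1}: if $\Theta_\omega V_1=\Lambda_\omega=\Theta_\omega V_2$ a.e., then $\Theta_\omega((V_1-V_2)f)=0$ a.e., and condition $(iii)$ of $\{\Theta_\omega\}$ gives $\|(V_1-V_2)f\|^2=\int_{\Omega}\|\Theta_\omega(V_1-V_2)f\|^2 d\mu(\omega)=0$.

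For the converse, assume $V\in B(H)$ satisfies $\Lambda_\omega=\Theta_\omega V$, $a.e.~[\mu]$, and $AI\leq V^*V\leq BI$. Strong measurability of $\{\Lambda_\omega f\}=\{\Theta_\omega Vf\}$ is inherited from $\{\Theta_\omega\}$, and for every $f\in H$ condition $(iii)$ for $\{\Theta_\omega\}$ gives $\int_{\Omega}\|\Lambda_\omega f\|^2 d\mu(\omega)=\int_{\Omega}\|\Theta_\omega Vf\|^2 d\mu(\omega)=\|Vf\|^2=\langle V^*Vf,f\rangle$, which the operator inequality bounds between $A\|f\|^2$ and $B\|f\|^2$. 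Hence $\{\Lambda_\omega\}_{\omega\in\Omega}$ is a $cg$-frame for $H$ with bounds $A$ and $B$.

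I do not expect a genuine obstacle here, since the argument is essentially that of Proposition \ref{t1} with ``isometry'' relaxed to a spectral bound on $V^*V$. The only points that require care are the bookkeeping of the almost-everywhere statements — the null set where $\Lambda_\omega=\Theta_\omega V$ may fail should be handled so that it does not depend on $f$ (by reading each identity as an equality of the fixed bounded operators $\Lambda_\nu$ and $\Theta_\nu V$ for a.e. $\nu$, or via separability of $H$) — and confirming at the outset that $\{\Lambda_\omega f\}$ really lies in $(\oplus_{\omega\in\Omega}H_\omega,\mu)_{L^2}$ so that property $(ii)$ of the $cg$-orthonormal basis may legitimately be applied.
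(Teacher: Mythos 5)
Your proposal is correct and follows essentially the same route as the paper: define $V$ weakly via $\langle Vf,h\rangle=\int_{\Omega}\langle\Theta_\omega^*\Lambda_\omega f,h\rangle\,d\mu(\omega)$, derive $\Lambda_\omega=\Theta_\omega V$ a.e.\ from property $(ii)$ of the $cg$-orthonormal basis, and read the frame inequality off the identity $\langle V^*Vf,f\rangle=\int_{\Omega}\|\Lambda_\omega f\|^2\,d\mu(\omega)$, with the converse and uniqueness handled as in Proposition \ref{t1}. Your version is in fact slightly more careful than the paper's (which defers to Proposition \ref{t1} and contains the typo $VV^*$ for $V^*V$), notably in recording the correct bound $\sqrt{B}\,\|f\|\,\|h\|$ and in checking that $\{\Lambda_\omega f\}$ lies in $(\oplus_{\omega\in\Omega}H_\omega,\mu)_{L^2}$ before invoking condition $(ii)$.
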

\begin{proof}
Let $\{\Lambda_\omega\}_{\omega \in \Omega}$ be a $cg$-frame for $H$. Similar to the proof of Proposition \ref{t1}, the operator $V$ weakly defined by
$$\langle Vf,h\rangle=\int_{\Omega}\langle\Theta^{*}_{\omega}\Lambda_{\omega}f,h\rangle d\mu(\omega),\quad f,h \in H,$$
is a one-to-one and bounded operator such that $\Lambda_\omega=\Theta_\omega V$, $a.e.~ [\mu]$.
Also for each $f\in H$,
\begin{align*}
\|Vf\|^2=\langle Vf, Vf\rangle&=\int_{\Omega}\langle \Theta^*_\omega\Lambda_\omega,Vf\rangle d\mu(\omega)=\int_{\Omega}\langle \Lambda_\omega f,\Theta_{\omega}Vf\rangle d\mu(\omega)
\\&=\int_{\Omega} \|\Lambda_\omega f \|^{2} d\mu(\omega).
\end{align*}
Therefore
 $$ A\langle f,f\rangle \leq \langle V^*Vf,f\rangle\leq B\langle f,f\rangle,$$
which implies $AI\leq VV^*\leq BI$.
\\The opposite implication is similar to the Proposition \ref{t1}.
\end{proof}

\begin{theorem}\label{ob2}
Suppose that $\{\Lambda_\omega\}_{\omega \in \Omega}$ is a $cg$-orthonormal basis for $H$ with respect to $\{H_\omega\}_{\omega \in\Omega}$  and $V \in B(H)$. Then $\{\Lambda_\omega V\}_{\omega \in \Omega}$ is a $cg$-orthonormal basis for $H$ with respect to $\{H_\omega\}_{\omega \in\Omega}$  if  and only if $V$ is unitary.
\end{theorem}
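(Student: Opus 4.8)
The plan is to recast both hypotheses as statements about synthesis operators. Let $T_\Lambda\colon\big(\oplus_{\omega\in\Omega}H_\omega,\mu\big)_{L^2}\to H$ be the synthesis operator of $\{\Lambda_\omega\}_{\omega\in\Omega}$ from Proposition \ref{Tg}, so that $T_\Lambda^* h(\omega)=\Lambda_\omega h$. First I would establish the auxiliary fact that a family $\{\Gamma_\omega\in B(H,H_\omega):\omega\in\Omega\}$ for which every $\{\Gamma_\omega h\}_{\omega\in\Omega}$ is strongly measurable is a $cg$-orthonormal basis \emph{if and only if} its synthesis operator is unitary. Indeed, condition $(iii)$ in the definition of a $cg$-orthonormal basis says exactly that $\{\Gamma_\omega\}$ is a Parseval $cg$-frame, i.e.\ that its frame operator $T_\Gamma T_\Gamma^*$ equals $I_H$; and, writing $\varphi=\{f_\omega\}$, condition $(ii)$ after the rearrangement
\[
\int_\Omega\langle\Gamma_\omega^* f_\omega,\Gamma_\nu^* g_\nu\rangle\,d\mu(\omega)=\langle T_\Gamma\varphi,\Gamma_\nu^* g_\nu\rangle=\langle\Gamma_\nu T_\Gamma\varphi,g_\nu\rangle=\big\langle(T_\Gamma^* T_\Gamma\varphi)(\nu),g_\nu\big\rangle
\]
says (using separability of each $H_\nu$ and that $\varphi$ is arbitrary) exactly that $T_\Gamma^* T_\Gamma=I$. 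Hence $\{\Gamma_\omega\}$ is a $cg$-orthonormal basis iff $T_\Gamma T_\Gamma^*=I$ and $T_\Gamma^* T_\Gamma=I$, that is, iff $T_\Gamma$ is unitary; in particular $T_\Lambda$ is unitary.

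Next I would identify the synthesis operator of $\{\Lambda_\omega V\}_{\omega\in\Omega}$. Since $V$ is bounded, each $\{\Lambda_\omega Vh\}_{\omega\in\Omega}=\{\Lambda_\omega(Vh)\}_{\omega\in\Omega}$ is strongly measurable, and $\{\Lambda_\omega V\}$ is a $cg$-Bessel family because, by condition $(iii)$ for $\{\Lambda_\omega\}$, $\int_\Omega\|\Lambda_\omega Vh\|^2\,d\mu(\omega)=\|Vh\|^2\le\|V\|^2\|h\|^2$. Using $(\Lambda_\omega V)^*=V^*\Lambda_\omega^*$ one computes, for $\varphi\in\big(\oplus_{\omega\in\Omega}H_\omega,\mu\big)_{L^2}$ and $h\in H$,
\[
\langle T_{\Lambda V}\varphi,h\rangle=\int_\Omega\langle V^*\Lambda_\omega^*\varphi(\omega),h\rangle\,d\mu(\omega)=\int_\Omega\langle\Lambda_\omega^*\varphi(\omega),Vh\rangle\,d\mu(\omega)=\langle T_\Lambda\varphi,Vh\rangle=\langle V^* T_\Lambda\varphi,h\rangle,
\]
so $T_{\Lambda V}=V^* T_\Lambda$. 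By the auxiliary fact, $\{\Lambda_\omega V\}$ is a $cg$-orthonormal basis iff $V^* T_\Lambda$ is unitary; since $T_\Lambda$ is unitary, $V^* T_\Lambda$ is unitary iff $V^*$ is unitary, i.e.\ iff $V$ is unitary. This proves both implications simultaneously.

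The step I expect to be the most delicate is the equivalence ``$(ii)\Leftrightarrow T_\Gamma^* T_\Gamma=I$'': it requires moving the weakly defined vector $T_\Gamma\varphi$ past the bounded map $\Gamma_\nu$ and then converting, for each fixed $\varphi$, the scalar identities (valid for a.e.\ $\nu$ and all $g_\nu\in H_\nu$) into the equality $(T_\Gamma^* T_\Gamma\varphi)(\nu)=\varphi(\nu)$ almost everywhere, where separability of the $H_\nu$ is used. If one prefers to bypass the reformulation, the ``if'' direction can be checked directly against conditions $(i)$–$(iii)$ (only $(ii)$ needs a line: $\int_\Omega\langle V^*\Lambda_\omega^* f_\omega,V^*\Lambda_\nu^* g_\nu\rangle d\mu(\omega)=\int_\Omega\langle\Lambda_\omega^* f_\omega,\Lambda_\nu^* g_\nu\rangle d\mu(\omega)=\langle f_\nu,g_\nu\rangle$ because $VV^*=I$), while for the ``only if'' direction condition $(iii)$ immediately gives $\|Vh\|=\|h\|$, so $V$ is an isometry; surjectivity of $V$ then follows by subtracting condition $(ii)$ for $\{\Lambda_\omega\}$ from condition $(ii)$ for $\{\Lambda_\omega V\}$, which yields $\Lambda_\nu(I-VV^*)T_\Lambda\varphi=0$ a.e.\ for every $\varphi$, hence $(I-VV^*)T_\Lambda\varphi=0$ (because $\int_\Omega\|\Lambda_\omega h\|^2 d\mu(\omega)=\|h\|^2$ forces $\{\Lambda_\omega\}$ to separate points), and finally $I-VV^*=0$ since $T_\Lambda$ is onto.
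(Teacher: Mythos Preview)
Your proof is correct, and it follows a genuinely different route from the paper's.

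Your primary argument recasts everything in terms of the synthesis operator: you establish that a family is a $cg$-orthonormal basis precisely when its synthesis operator is unitary, observe that $T_{\Lambda V}=V^*T_\Lambda$, and then reduce the theorem to the trivial fact that $V^*T_\Lambda$ is unitary iff $V^*$ is, $T_\Lambda$ being unitary. This handles both implications at once and is conceptually cleaner. The paper, by contrast, verifies the ``if'' direction by direct computation (essentially your one-line alternative using $VV^*=I$), and for the ``only if'' direction first gets $V$ isometric from condition $(iii)$ (as you do) but then obtains surjectivity by invoking Proposition~\ref{t1}: applying it with $\{\Lambda_\omega V\}$ playing the role of the $cg$-orthonormal basis and $\{\Lambda_\omega\}$ as the Parseval $cg$-frame produces an isometry $U$ with $\Lambda_\omega=\Lambda_\omega VU$ a.e., whence $T_\Lambda=(VU)^*T_\Lambda$ and $S_\Lambda=I$ force $VU=I$.

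Your alternative argument for surjectivity---subtracting condition $(ii)$ for the two families to get $\Lambda_\nu(I-VV^*)T_\Lambda\varphi=0$ a.e., then using condition $(iii)$ and surjectivity of $T_\Lambda$---is more elementary than the paper's, since it avoids appealing to Proposition~\ref{t1} altogether. The one point you rightly flag as delicate, the direction ``$T_\Gamma^*T_\Gamma=I\Rightarrow(ii)$'' of the auxiliary equivalence (where the null set in $(ii)$ should be uniform over all $\{f_\omega\}$ but $T_\Gamma^*T_\Gamma=I$ a priori gives only a $\varphi$-dependent null set), is needed for the ``if'' direction of your main route; your alternative direct verification sidesteps this cleanly, so the overall proposal stands.
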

\begin{proof}
Assume that $\{\Lambda_\omega V\}_{\omega \in \Omega}$ is a $cg$-orthonormal basis for $H$. Since $\{\Lambda_\omega\}_{\omega \in \Omega}$ is also a $cg$-orthonormal basis for $H$, for each $f \in H$, we have
$$\|Vf\|^2=\int_{\Omega} \|\Lambda_\omega Vf \|^{2} d\mu(\omega)=\|f\|^2.$$
Hence $V$ is an isometry and $V^*V=I$. Considering $\Theta_\omega=\Lambda_\omega V, ~\omega\in \Omega$, in Theorem \ref{t1}, there exists a unique isometry $U\in B(H)$ such that $\Lambda_\omega=\Lambda_\omega VU$, $a.e.~ [\mu]$. Let $T_\Lambda$ and $T_{\Lambda VU}$ be
the synthesis operators of Parseval $cg$-frames $\{\Lambda_\omega\}_{\omega \in \Omega}$ and $\{\Lambda_\omega VU\}_{\omega \in \Omega}$, respectively.
Then $T_\Lambda=(VU)^*T_\Lambda=U^*V^*T_\Lambda$. We deduce $T_\Lambda T_\Lambda^*=U^*V^*T_\Lambda T_\Lambda^*$ or $S_\Lambda=U^*V^*S_\Lambda$, where $S_\Lambda$ is the frame operator of $\{\Lambda_\omega \}_{\omega \in \Omega}$. Since $S_\Lambda=I$, so $I=U^*V^*$ or equivalently $VU=I$.
This implies that $V$ is onto. Also $V$ is one-to-one, so $V$ is invertible and $V^{-1}=V^{*}$, which means  $V$ is a unitary.

For the other implication, suppose that $V$ is a unitary operator. Now, we show that $\{\Lambda_\omega V\}_{\omega \in \Omega}$ is a $cg$-orthonormal basis for $H$.
For almost all $\nu \in \Omega$, each $g_\nu \in H_\nu$ and each $\{f_{\omega}\}_{\omega \in\Omega} \in (\oplus_{\omega \in\Omega}H_\omega,\mu)_{L^{2}}$, we have
\begin{align*}
\int_{\Omega}\langle (\Lambda_\omega V)^*f_\omega,(\Lambda_\nu V)^*g_\nu\rangle d\mu(\omega)
&=\int_{\Omega}\langle V^*\Lambda_\omega^*f_\omega,V^*\Lambda_\nu^*g_\nu\rangle d\mu(\omega)
\\&=\int_{\Omega}\langle \Lambda_\omega^*f_\omega,\Lambda_\nu^*g_\nu\rangle d\mu(\omega)=\langle f_\nu, g_\nu\rangle.
\end{align*}
Also for each $f\in H$,
$$\int_{\Omega}\|\Lambda_\omega Vf\|^2d\mu(\omega)=\|Vf\|^2=\|f\|^2.$$
Therefore $\{\Lambda_\omega V\}_{\omega \in \Omega}$ is a $cg$-orthonormal basis for $H$ with respect to $\{H_\omega\}_{\omega \in\Omega}$.
\end{proof}

Concerning to $cg$-Riesz bases which are defined in \cite{mad}, we have next result.

\begin{theorem}
Let $(\Omega,\mu)$ be a measure space where $\mu$ is $\sigma$-finite.
Suppose that  $\{\Lambda_\omega\}_{\omega \in \Omega}$ is a $cg$-Riesz basis for $H$ and $V \in B(H)$. Then $\{\Lambda_\omega V\}_{\omega \in \Omega}$ is a $cg$-Riesz basis for $H$ if  and only if $V$ is invertible.
\end{theorem}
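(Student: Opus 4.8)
The plan is to reduce the statement to a fact about synthesis operators, in the spirit of the proofs of Theorems~\ref{ob1} and~\ref{ob2}. The key computational input is the identity $T_{\Lambda V}=V^{*}T_{\Lambda}$, where $T_{\Lambda}$ and $T_{\Lambda V}$ denote the synthesis operators of $\{\Lambda_{\omega}\}_{\omega\in\Omega}$ and $\{\Lambda_{\omega}V\}_{\omega\in\Omega}$, respectively.

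First I would check that $\{\Lambda_{\omega}V\}_{\omega\in\Omega}$ is a $cg$-Bessel family: for each $h\in H$ the family $\{(\Lambda_{\omega}V)h\}_{\omega\in\Omega}=\{\Lambda_{\omega}(Vh)\}_{\omega\in\Omega}$ is strongly measurable, and if $B$ is a Bessel bound of $\{\Lambda_{\omega}\}_{\omega\in\Omega}$ (which exists, being a $cg$-Riesz basis), then $\int_{\Omega}\|\Lambda_{\omega}Vh\|^{2}\,d\mu(\omega)\le B\|V\|^{2}\|h\|^{2}$. By Proposition~\ref{Tg} the synthesis operator $T_{\Lambda V}$ is thus well defined and bounded, and since $(\Lambda_{\omega}V)^{*}=V^{*}\Lambda_{\omega}^{*}$, for every $\varphi\in\big(\oplus_{\omega\in\Omega}H_{\omega},\mu\big)_{L^{2}}$ and $h\in H$,
$$\langle T_{\Lambda V}\varphi,h\rangle=\int_{\Omega}\langle V^{*}\Lambda_{\omega}^{*}\varphi(\omega),h\rangle\,d\mu(\omega)=\int_{\Omega}\langle \Lambda_{\omega}^{*}\varphi(\omega),Vh\rangle\,d\mu(\omega)=\langle T_{\Lambda}\varphi,Vh\rangle=\langle V^{*}T_{\Lambda}\varphi,h\rangle,$$
so $T_{\Lambda V}=V^{*}T_{\Lambda}$.

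Next I would recall the operator-theoretic description of $cg$-Riesz bases from \cite{mad}: under the $\sigma$-finiteness hypothesis on $\mu$, a $cg$-Bessel family is a $cg$-Riesz basis if and only if its synthesis operator is a bounded invertible operator from $\big(\oplus_{\omega\in\Omega}H_{\omega},\mu\big)_{L^{2}}$ onto $H$ (the lower Riesz bound corresponding to the synthesis operator being bounded below, and $cg$-completeness to density of its range, i.e.\ injectivity of the analysis operator). In particular $T_{\Lambda}$ is invertible, since $\{\Lambda_{\omega}\}_{\omega\in\Omega}$ is a $cg$-Riesz basis. Both implications then follow at once from $T_{\Lambda V}=V^{*}T_{\Lambda}$: if $V$ is invertible then $V^{*}$ is invertible, so $T_{\Lambda V}$ is a composition of invertible operators and hence invertible, whence $\{\Lambda_{\omega}V\}_{\omega\in\Omega}$ is a $cg$-Riesz basis; conversely, if $\{\Lambda_{\omega}V\}_{\omega\in\Omega}$ is a $cg$-Riesz basis, then $T_{\Lambda V}$ is invertible, so $V^{*}=T_{\Lambda V}\,T_{\Lambda}^{-1}$ is invertible, and therefore so is $V=(V^{*})^{*}$.

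The point to watch is the passage, in the ``$V$ invertible'' direction, from ``$T_{\Lambda V}$ is a bounded bijection'' back to ``$\{\Lambda_{\omega}V\}_{\omega\in\Omega}$ is a $cg$-Riesz basis'': this is exactly where $\sigma$-finiteness of $\mu$ is needed, through the $cg$-analogues of Proposition~\ref{2.7} and Theorem~\ref{2.9} that underlie the characterization in \cite{mad}. An essentially equivalent route avoids this step by using the representation $\Lambda_{\omega}=\Theta_{\omega}Q$, $a.e.\ [\mu]$, of a $cg$-Riesz basis through a $cg$-orthonormal basis $\{\Theta_{\omega}\}_{\omega\in\Omega}$ and an invertible $Q\in B(H)$: then $\Lambda_{\omega}V=\Theta_{\omega}(QV)$, $a.e.\ [\mu]$, and one invokes the characterization of $cg$-Riesz bases of the form $\{\Theta_{\omega}W\}_{\omega\in\Omega}$ together with the fact that $QV$ is invertible precisely when $V$ is.
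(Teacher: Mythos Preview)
Your proof is correct and follows essentially the same route as the paper: both arguments hinge on the identity $T_{\Lambda V}=V^{*}T_{\Lambda}$ and the characterization from \cite{mad} that a $cg$-Bessel family is a $cg$-Riesz basis precisely when its synthesis operator is a bounded bijection, so that each direction reduces to composing or factoring invertible operators. The only cosmetic difference is that in the ``$V$ invertible'' direction the paper writes out explicit lower and upper norm bounds for $T_{\Lambda V}$ (and separately checks the $cg$-completeness condition $\Lambda_{\omega}Vf=0\ a.e.\Rightarrow f=0$), whereas you invoke invertibility of $V^{*}T_{\Lambda}$ directly.
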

\begin{proof}
Let $\{\Lambda_\omega V\}_{\omega \in \Omega}$ be a $cg$-Riesz basis  for $H$. By definition of a $cg$-Riesz basis, the operator $T_{\Lambda V}$ weakly defined by
$$\langle T_{\Lambda V}\varphi,h\rangle=\int_{\Omega}\langle (\Lambda_\omega V)^*\varphi(\omega),h\rangle d\mu(\omega),\quad \varphi  \in (\oplus_{\omega \in\Omega}H_\omega,\mu)_{L^{2}},h\in H,$$
is well-defined and there exist positive constants $A$ and $B$ such that
$$A\|\varphi\|\leq \|T_{\Lambda V}\varphi\|\leq B \|\varphi\|, \quad \varphi  \in (\oplus_{\omega \in\Omega}H_\omega,\mu)_{L^{2}}.$$
An easy calculation shows $T_{\Lambda V}=V^*T_\Lambda$, where $T_\Lambda$ is defined similarly for  $\{\Lambda_\omega\}_{\omega \in \Omega}$. By Lemma 3.2 $(i)$  in \cite{mad}, $T_{\Lambda V}$ and $T_\Lambda$ both are invertible. So $V^*=T_{\Lambda V}T_\Lambda
^{-1}$ is invertible and $V$ is invertible.
\\Conversely, let $V \in B(H)$ be invertible. If $\Lambda_\omega Vf=0$, $a.e.~ [\mu]$, then $Vf=0$ and it implies $f=0$. The operator $T_{\Lambda V}$ given by
$$\langle T_{\Lambda V}\varphi,h\rangle=\int_{\Omega}\langle (\Lambda_\omega V)^*\varphi(\omega),h\rangle d\mu(\omega),\quad \varphi  \in (\oplus_{\omega \in\Omega}H_\omega,\mu)_{L^{2}},h\in H,$$
is well-defined, bounded and $T_{\Lambda V}=V^*T_\Lambda$, where $T_\Lambda$ is defined similar to $T_{\Lambda V}$. Also, there are positive constants
 $A$ and $B$  such that
 $$A\|\varphi\|\leq \|T_{\Lambda }\varphi\|\leq B \|\varphi\|, \quad \varphi  \in (\oplus_{\omega \in\Omega}H_\omega,\mu)_{L^{2}}.$$
 For each $\varphi  \in (\oplus_{\omega \in\Omega}H_\omega,\mu)_{L^{2}}$, $ \|T_{\Lambda V }\varphi\|=\|V^*T_{\Lambda }\varphi\|\leq B \|V^*\|\|\varphi\|$, and
 $$\|T_{\Lambda V }\varphi\|=\|V^*T_{\Lambda }\varphi\|\geq \frac{1}{\|V^{-1}\|}\|T_{\Lambda }\varphi\|\geq \frac{A}{\|V^{-1}\|}\|\varphi\|.$$
 This shows that $\{\Lambda_\omega V\}_{\omega \in \Omega}$ is a $cg$-Riesz basis for $H$.
\end{proof}

Now, we define  $cg$-complete families  as follows:

\begin{definition}
A family $\{\Lambda_{\omega}\in B(H,H_{\omega}): \omega \in \Omega\}$ is called a $cg$-complete family  for $H$ with respect to $\{H_\omega\}_{\omega\in \Omega}$, if:
 $$\big\{h:~\Lambda_{\omega}h=0,\,a.e.~[\mu]\big\}=\{0\}.$$
\end{definition}

\begin{lemma}\label{complete}
$\{\Lambda_{\omega}\}_{\omega \in\Omega}$ is a $cg$-complete family  for $H$ and $V \in B(H)$. Then $\{\Lambda_{\omega}V\}_{\omega \in\Omega}$ is a $cg$-complete family  for $H$ if and only if $V$ is one-to-one.
\end{lemma}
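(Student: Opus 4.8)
The plan is to prove both directions of the equivalence by unwinding the definition of a $cg$-complete family, which asserts that the only $h\in H$ killed $\mu$-almost everywhere by the operators in the family is $h=0$. The key observation is that $(\Lambda_\omega V)h = \Lambda_\omega(Vh)$, so membership of $h$ in the "kernel set'' of $\{\Lambda_\omega V\}_{\omega\in\Omega}$ is governed by whether $Vh$ lies in the "kernel set'' of $\{\Lambda_\omega\}_{\omega\in\Omega}$, together with completeness of the latter.

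For the forward direction I would argue by contrapositive: suppose $V$ is not one-to-one, so there is $h\neq 0$ with $Vh=0$. Then $(\Lambda_\omega V)h = \Lambda_\omega(Vh) = \Lambda_\omega 0 = 0$ for every $\omega\in\Omega$ (in particular $\mu$-a.e.), so $h$ is a nonzero element of $\{h:~(\Lambda_\omega V)h=0,\,a.e.~[\mu]\}$, contradicting the assumption that $\{\Lambda_\omega V\}_{\omega\in\Omega}$ is $cg$-complete. Hence $V$ must be one-to-one.

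For the converse, assume $V$ is one-to-one and let $h\in H$ satisfy $(\Lambda_\omega V)h = 0$, $a.e.~[\mu]$. Then $\Lambda_\omega(Vh) = 0$, $a.e.~[\mu]$, so $Vh$ belongs to $\{g:~\Lambda_\omega g=0,\,a.e.~[\mu]\}$, which equals $\{0\}$ because $\{\Lambda_\omega\}_{\omega\in\Omega}$ is a $cg$-complete family. Thus $Vh=0$, and since $V$ is one-to-one, $h=0$. Therefore $\{h:~(\Lambda_\omega V)h=0,\,a.e.~[\mu]\}=\{0\}$, i.e.\ $\{\Lambda_\omega V\}_{\omega\in\Omega}$ is $cg$-complete.

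Strictly, one should also note that $\{\Lambda_\omega V\}_{\omega\in\Omega}$ satisfies the standing measurability hypothesis implicit in the definition: for each $h\in H$, $\{\Lambda_\omega V h\}_{\omega\in\Omega} = \{\Lambda_\omega(Vh)\}_{\omega\in\Omega}$ is strongly measurable because $\{\Lambda_\omega\}_{\omega\in\Omega}$ has that property applied to the vector $Vh\in H$. There is essentially no obstacle here; the proof is a direct set-theoretic manipulation, and the only mild subtlety is being careful that "$(\Lambda_\omega V)h = 0$ a.e.'' is literally the same statement as "$\Lambda_\omega(Vh)=0$ a.e.'', which is immediate since $\Lambda_\omega V$ is by definition the composition acting as $h\mapsto \Lambda_\omega(Vh)$.
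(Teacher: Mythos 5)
Your proof is correct and follows essentially the same route as the paper's: the forward direction is the paper's argument stated contrapositively (a nonzero vector in $\ker V$ would violate completeness of $\{\Lambda_\omega V\}_{\omega\in\Omega}$), and the converse is identical (from $\Lambda_\omega Vh=0$ a.e.\ deduce $Vh=0$ by completeness of $\{\Lambda_\omega\}_{\omega\in\Omega}$, then $h=0$ by injectivity). Your added remark on strong measurability of $\{\Lambda_\omega Vh\}_{\omega\in\Omega}$ is a small point the paper omits.
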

\begin{proof}
Let $\{\Lambda_{\omega}V\}_{\omega \in\Omega}$ be a $cg$-complete family. If $Vh=0$, then $$\Lambda_{\omega}Vh=0, ~\omega \in\Omega,$$  so $h=0$ and $V$ is one-to-one.
\\ Now, suppose  $V$ is one-to-one and $\Lambda_{\omega}Vh=0, ~a.e.~ [\mu]$. Since $\{\Lambda_{\omega}\}_{\omega \in\Omega}$ is $cg$-complete, $Vh=0$. Hence $h=0$ , which implies $\{\Lambda_{\omega}V\}_{\omega \in\Omega}$ is  $cg$-complete.
\end{proof}

\begin{proposition}
Let $(\Omega,\mu)$ be a measure space where $\mu$ is $\sigma$-finite
and $\{\Lambda_\omega\}_{\omega \in \Omega}$ be a $cg$-Bessel family  for $H$. Then $\{\Lambda_\omega\}_{\omega \in \Omega}$ is $cg$-complete if and only if
$\overline{R(T_\Lambda)}=H$, where $T_\Lambda$ is the synthesis operator of $\{\Lambda_\omega\}_{\omega \in \Omega}$.
\end{proposition}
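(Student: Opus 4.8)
The plan is to exploit the adjoint relationship between the synthesis operator $T_\Lambda$ and the analysis operator $T_\Lambda^*$, which by Proposition \ref{Tg} satisfies $T_\Lambda^*(h)(\omega) = \Lambda_\omega h$ for all $h \in H$ and $\omega \in \Omega$. The key observation is that $cg$-completeness is precisely the statement that $T_\Lambda^*$ is injective, and the standard Hilbert space identity $\overline{R(T_\Lambda)} = \big(N(T_\Lambda^*)\big)^{\perp}$ will convert this into the claimed range condition.

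First I would spell out the equivalence $cg$-completeness $\iff$ $N(T_\Lambda^*) = \{0\}$. Indeed, by definition $\{\Lambda_\omega\}_{\omega\in\Omega}$ is $cg$-complete means that the only $h\in H$ with $\Lambda_\omega h = 0$ for a.e.\ $\omega$ is $h=0$. But $\Lambda_\omega h = 0$ a.e.\ $[\mu]$ is exactly the statement that $T_\Lambda^*(h) = 0$ as an element of $\big(\oplus_{\omega\in\Omega}H_\omega,\mu\big)_{L^2}$, since two elements of that $L^2$-space agree iff they agree $\mu$-almost everywhere. Hence $cg$-completeness holds iff $N(T_\Lambda^*)=\{0\}$. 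Here I use that $\{\Lambda_\omega\}_{\omega\in\Omega}$ is a $cg$-Bessel family so that $T_\Lambda$ and $T_\Lambda^*$ are genuinely well-defined bounded operators (Proposition \ref{Tg}), and the $\sigma$-finiteness of $\mu$ is available if needed to ensure the measure-theoretic identifications in $\big(\oplus_{\omega\in\Omega}H_\omega,\mu\big)_{L^2}$ behave correctly.

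Second I would invoke the elementary functional-analytic fact that for any bounded operator $U$ between Hilbert spaces one has $\overline{R(U)} = N(U^*)^{\perp}$; applying this with $U = T_\Lambda$ gives $\overline{R(T_\Lambda)} = N(T_\Lambda^*)^{\perp}$. Combining the two steps: $\overline{R(T_\Lambda)} = H$ iff $N(T_\Lambda^*)^{\perp} = H$ iff $N(T_\Lambda^*) = \{0\}$ iff $\{\Lambda_\omega\}_{\omega\in\Omega}$ is $cg$-complete. This closes the proof.

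I do not anticipate a genuine obstacle here; the only delicate point is the measure-theoretic identification in the first step, namely that vanishing $\mu$-a.e.\ is the same as vanishing as an element of the $L^2$-direct-integral space $\big(\oplus_{\omega\in\Omega}H_\omega,\mu\big)_{L^2}$. This is where $\sigma$-finiteness (and strong measurability of $\{\Lambda_\omega h\}_{\omega\in\Omega}$, part of the $cg$-Bessel hypothesis) is used, and it should be stated carefully but needs no computation. Everything else is the standard orthogonal-complement argument.
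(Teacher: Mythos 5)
Your proof is correct. The identification of $cg$-completeness with $N(T_\Lambda^*)=\{0\}$ is exactly right: $T_\Lambda^*h$ is the element $\omega\mapsto\Lambda_\omega h$ of $\big(\oplus_{\omega\in\Omega}H_\omega,\mu\big)_{L^2}$, and this vanishes in that space iff $\int_\Omega\|\Lambda_\omega h\|^2\,d\mu(\omega)=0$ iff $\Lambda_\omega h=0$ a.e.\ $[\mu]$ --- a fact that needs no $\sigma$-finiteness at all, only that a nonnegative function with zero integral vanishes a.e. Combined with $\overline{R(T_\Lambda)}=N(T_\Lambda^*)^{\perp}$, the result follows.

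The paper takes a more hands-on route that re-derives this duality by computation rather than citing it. For the forward direction it picks $f\perp R(T_\Lambda)$, decomposes $\Omega=\bigcup_n\Omega_n$ with $\mu(\Omega_n)<\infty$ using $\sigma$-finiteness, and tests against the functions $F_n=\Lambda_{(\cdot)}f\cdot\chi_{\Omega_n}$ to conclude $\int_{\Omega_n}\|\Lambda_\omega f\|^2\,d\mu(\omega)=0$ for each $n$, hence $\Lambda_\omega f=0$ a.e.; for the converse it approximates $f$ by elements $T_\Lambda F_n$ and passes to the limit in $\langle T_\Lambda F_n,f\rangle$. Your version buys two things: it is shorter, and it exposes that the $\sigma$-finiteness hypothesis is not actually needed here, since the Bessel condition already guarantees that $\omega\mapsto\Lambda_\omega f$ lies in $\big(\oplus_{\omega\in\Omega}H_\omega,\mu\big)_{L^2}$ (so one could test against $T_\Lambda^*f$ itself, with no truncation to finite-measure pieces). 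What the paper's approach buys is only that it avoids naming the identity $\overline{R(U)}=N(U^*)^{\perp}$ explicitly; the underlying mechanism is the same orthogonality between the range of the synthesis operator and the kernel of the analysis operator. You could drop the hedging about where $\sigma$-finiteness enters --- it enters nowhere in your argument --- but that is a presentational point, not a gap.
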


\begin{proof}
Assume that $\{\Lambda_\omega\}_{\omega \in \Omega}$ is $cg$-complete. To show that $\overline{R(T_\Lambda)}=H$, it is enough to prove that if $f\in H$ and
$f\bot R(T_\Lambda)$, so $f=0$. Let $f\in H$ and $f\bot R(T_\Lambda)$, then for each $F \in  (\oplus_{\omega \in\Omega}H_\omega,\mu)_{L^{2}}$,
$$0=\langle T_\Lambda F, f\rangle=\int_{\Omega} \langle \Lambda^*_\omega F(\omega),f\rangle d\mu(\omega).$$
Since $(\Omega,\mu)$ is $\sigma$-finite, there exists a family $\{\Omega_n\}_{n=1}^{\infty}$ of disjoint measurable subsets of $\Omega$,  such that
$\Omega=\bigcup_{n=1}^{\infty}\Omega_n$ and $\mu(\Omega_n)<\infty$, $n\geq 1$. For each $n\geq 1$, set
$$F_n(\omega)=\left\{
     \begin{array}{ll}
      \Lambda_\omega f,\quad &  \hbox{$\omega\in \Omega_n$} \\
      0,\quad &  \hbox{$otherwise$}
     \end{array}
   \right.,$$
then
$$\langle T_\Lambda F_n,f\rangle=\int_{\Omega} \langle  F(\omega),\Lambda_\omega f\rangle d\mu(\omega)=\|\Lambda_\omega f\|^2\mu(\Omega_n)=0.$$
Thus  $\Lambda_\omega f=0$, $a.e.~ [\mu]$, which implies $f=0$. So $\overline{R(T_\Lambda)}=H$.
\\For the opposite implication, suppose that $\overline{R(T_\Lambda)}=H$ and there exists a $f\neq 0$ such that
$$\Lambda_\omega f=0, ~a.e.~ [\mu].$$
 There exists a sequence
$\{F_n\}_{n=1}^{\infty} \in (\oplus_{\omega \in\Omega}H_\omega,\mu)_{L^{2}}$ such that $\lim_{n\rightarrow \infty} T_\Lambda F_n=f$. Then
\begin{align*}
\|f\|^{2}&=\langle f,f\rangle=\langle \lim_{n\rightarrow \infty} T_\Lambda F_n, f\rangle
=\lim_{n \rightarrow \infty} \langle T_\Lambda F_n, f\rangle
\\&=\lim_{n \rightarrow \infty} \int_{\Omega}\langle \Lambda_\omega^*F_n(\omega),f\rangle d\mu(\omega)
\\&=\lim_{n \rightarrow \infty} \int_{\Omega}\langle F_n(\omega),\Lambda_\omega f\rangle d\mu(\omega)=0,
\end{align*}
which is a contradiction.
\end{proof}

\begin{remark}
 Let $(\Omega,\mu)$ be a measure space and consider the family $\{\Lambda_{\omega}\in B(H,H_{\omega}): \omega \in \Omega\}$.
Also, suppose that $\{e_{\omega,k}\}_{\omega \in \Omega, k\in \mathbb{K}_\omega }$ is an orthonormal basis for Hilbert space
$\oplus_{\omega \in\Omega}H_\omega$  such that for each $\omega \in \Omega$,  $\{e_{\omega,k}\}_{ k\in \mathbb{K}_\omega }$ is an orthonormal basis of $H_\omega$ and for each $h \in H$, the mapping
\begin{align*}
&\Omega\times\mathbb{K}\longrightarrow\mathbb{C}
\\(\omega&,k)\longmapsto \langle h,e_{\omega,k}\rangle
\end{align*}
is measurable,  where $\mathbb{K}=\bigcup _{\omega \in \Omega} \mathbb{K}_\omega$.

The mapping $h\longmapsto \langle\Lambda_\omega h,e_{\omega,k}\rangle$, $\omega \in \Omega,~k\in \mathbb{K}_\omega$, defines a bounded linear functional on $H$. So there exist some $u_{\omega,k}\in H$ such that
$$\langle h,u_{\omega,k}\rangle=\langle \Lambda_\omega h,e_{\omega,k}\rangle, \quad h\in H, ~ \omega \in \Omega,~k\in \mathbb{K}_\omega.$$
Therefore $\Lambda_\omega h=\sum_{k\in \mathbb{K}_\omega}\langle h,u_{\omega,k}\rangle e_{\omega,k}$, $h\in H$.
Since $$\sum_{k\in \mathbb{K}_\omega}|\langle h,u_{\omega,k}\rangle e_{\omega,k}|^2=\|\Lambda_\omega h\|^2\leq \|\Lambda_\omega\|^2\|h\|^2,$$
so for each $\omega \in \Omega$, $\{u_{\omega,k}\}_{k\in \mathbb{K}_\omega}$ is a $c$-Bessel family for $H_\omega$.
Also
\begin{align}\label{u}
u_{\omega,k}=\Lambda^*e_{\omega,k}, \quad \omega \in \Omega,~k\in \mathbb{K}_\omega.
\end{align}
The family $\{u_{\omega,k}\}_{\omega \in \Omega,k\in \mathbb{K}_\omega}$  is called the family induced by $\{\Lambda_\omega\}_{\omega \in \Omega}$ with respect to  $\{e_{\omega,k}\}_{\omega \in \Omega, k\in \mathbb{K}_\omega }$.

Consider the mapping $u(\omega,k):\Omega\times\mathbb{K}\longrightarrow H$ defined by
$$u(\omega,k)=\left\{
     \begin{array}{ll}
      u_{\omega,k}~,\quad &  \hbox{$k\in \mathbb{K}_\omega$} \\
      0~,\quad &  \hbox{$otherwise$}
     \end{array}
   \right.,$$
 where $\mathbb{K}=\bigcup _{\omega \in \Omega} \mathbb{K}_\omega$.

\vspace{1mm}
For each $h\in H$, $(\omega,k)\longmapsto \langle u_{\omega,k},h\rangle$ is measurable and
\begin{align}\label{uu}
 \int_{\Omega} \|\Lambda_\omega h\|^2 d\mu(\omega)&= \int_{\Omega}\sum_{k\in \mathbb{K}_\omega} |\langle h,  u_{\omega,k}\rangle|^2d\mu(\omega) \nonumber
 \\&= \int_{\Omega} \big(\int_{\mathbb{K}}|\langle h,  u_{\omega,k}\rangle|^2 dl(k)\big) d\mu(\omega),
\end{align}
where $l:\mathbb{K}\longrightarrow \mathbb{K}$ is the counting measure on $\mathbb{K}$.
If  $\{\Lambda_\omega\}_{\omega \in \Omega}$  is a $cg$-frame for $H$ with respect to $\{H_\omega\}_{\omega \in\Omega}$, then  $u$ is  a $c$-frame for $H$ with respect to ($\Omega\times\mathbb{K}, \mu \times\ l)$ and  with the same bounds of $\{\Lambda_\omega\}_{\omega \in \Omega}$.

The converse of that is true, too; if  $\{u_{\omega,k}\}_{\omega \in \Omega,k\in \mathbb{K}_\omega}$ is a $c$-frame for $H$, then $\{\Lambda_\omega\}_{\omega \in \Omega}$ is a $cg$-frame for $H$ with the same bounds of $\{u_{\omega,k}\}_{k\in \mathbb{K}_\omega}$.
\end{remark}

\begin{theorem}
Let $(\Omega,\mu)$ be a measure space where $\mu$ is $\sigma$-finite. Consider the family $\{\Lambda_{\omega}\in B(H,H_\omega);\omega \in \Omega\}$ and let $\{u_{\omega,k}\}_{\omega \in \Omega,k\in \mathbb{K}_\omega}$ be defined as in (\ref{u}). Then $\{\Lambda_\omega\}_{\omega \in \Omega}$ is a $cg$-frame (respectively, $cg$-Bessel family, tight $cg$-frame,
$cg$-Riesz basis, $cg$-orthonormal basis) for $H$ if and only if $\{u_{\omega,k}\}_{\omega \in \Omega,k\in \mathbb{K}_\omega}$ is a $c$-frame (respectively, $c$-Bessel family, tight $c$-frame, $c$-Riesz basis, $c$-orthonormal basis) for $H$.
\end{theorem}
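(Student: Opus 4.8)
The plan is to split the five equivalences into two groups. For the cases of $cg$-frame, $cg$-Bessel family and tight $cg$-frame almost nothing beyond the preceding Remark is needed. Indeed, identity (\ref{uu}) together with Tonelli's theorem (legitimate since $\mu$ is $\sigma$-finite) gives
\[
\int_{\Omega}\|\Lambda_\omega h\|^{2}\,d\mu(\omega)=\int_{\Omega\times\mathbb{K}}|\langle h,u(\omega,k)\rangle|^{2}\,d(\mu\times l)(\omega,k),\qquad h\in H,
\]
so the two sides of (\ref{ccg}) for $\{\Lambda_\omega\}_{\omega\in\Omega}$ become, verbatim, the defining inequalities of a $c$-frame for $u$ with respect to $(\Omega\times\mathbb{K},\mu\times l)$; together with the measurability of $(\omega,k)\mapsto\langle h,u(\omega,k)\rangle$, also recorded in the Remark, this settles these three cases with the same constants.

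For the two remaining cases the plan is to route everything through the synthesis operators. First I would introduce the map $U\colon\big(\oplus_{\omega\in\Omega}H_\omega,\mu\big)_{L^2}\longrightarrow L^{2}(\Omega\times\mathbb{K},\mu\times l)$, $(U\varphi)(\omega,k)=\langle\varphi(\omega),e_{\omega,k}\rangle$, and check that it is unitary: $\|U\varphi\|=\|\varphi\|$ is Parseval's identity in each $H_\omega$ followed by Tonelli, and $U$ is onto, with $(U^{-1}\psi)(\omega)=\sum_{k\in\mathbb{K}_\omega}\psi(\omega,k)e_{\omega,k}$. Next, using $u_{\omega,k}=\Lambda^{*}_\omega e_{\omega,k}$ from (\ref{u}) and the weak definitions of the synthesis operators, a short computation gives $T_\Lambda=T_u U$, where $T_\Lambda$ is the synthesis operator of $\{\Lambda_\omega\}_{\omega\in\Omega}$ (Proposition \ref{Tg}) and $T_u$ that of $u$. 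As $U$ is unitary, $T_\Lambda$ and $T_u$ are then interchangeable for everything we need: they have the same norm and the same lower bound, one is bounded (respectively bounded below, invertible) if and only if the other is, and $\ker T_\Lambda=U^{-1}(\ker T_u)$. I would also record that $\{\Lambda_\omega\}_{\omega\in\Omega}$ is $cg$-complete if and only if $u$ is $c$-complete, since $\Lambda_\omega h=0$ holds if and only if $\langle h,u_{\omega,k}\rangle=\langle\Lambda_\omega h,e_{\omega,k}\rangle=0$ for every $k$, the family $\{e_{\omega,k}\}_{k\in\mathbb{K}_\omega}$ being total in $H_\omega$.

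Granting this, the $cg$-Riesz basis case is immediate: by the definition in \cite{mad} a $cg$-Riesz basis is a $cg$-complete family whose synthesis operator satisfies $A\|\varphi\|\le\|T_\Lambda\varphi\|\le B\|\varphi\|$, and the $c$-Riesz basis condition for $u$ is the same statement for $T_u$; since $\|T_\Lambda\varphi\|=\|T_u(U\varphi)\|$ and $\|U\varphi\|=\|\varphi\|$, and since the two completeness notions coincide, each side implies the other with the same constants. For the $cg$-orthonormal basis case I would unwind the definition. Condition $(iii)$ reads $\|T^{*}_\Lambda h\|^{2}=\|h\|^{2}$, i.e.\ $T_\Lambda T^{*}_\Lambda=I$ (in particular $\{\Lambda_\omega\}_{\omega\in\Omega}$ is then $cg$-Bessel, so $T_\Lambda$ is bounded and forming $T^{*}_\Lambda$ is legitimate), while condition $(ii)$ — whose left-hand side equals $\langle T_\Lambda f,\Lambda^{*}_\nu g_\nu\rangle=\langle\Lambda_\nu T_\Lambda f,g_\nu\rangle=\langle(T^{*}_\Lambda T_\Lambda f)(\nu),g_\nu\rangle$ and whose right-hand side is $\langle f_\nu,g_\nu\rangle$ — amounts to $T^{*}_\Lambda T_\Lambda=I$. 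Hence $\{\Lambda_\omega\}_{\omega\in\Omega}$ is a $cg$-orthonormal basis if and only if $T_\Lambda$ is unitary, and, by the same unwinding, $u$ is a $c$-orthonormal basis if and only if $T_u$ is unitary; then $T_\Lambda=T_u U$ with $U$ unitary closes the loop.

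The step I expect to be the main obstacle is precisely this last equivalence for orthonormal bases. Condition $(ii)$ of a $cg$-orthonormal basis is phrased through the full reproducing identity $\int_{\Omega}\langle\Lambda^{*}_\omega f_\omega,\Lambda^{*}_\nu g_\nu\rangle\,d\mu(\omega)=\langle f_\nu,g_\nu\rangle$ for arbitrary $\{f_\omega\}\in\big(\oplus_{\omega\in\Omega}H_\omega,\mu\big)_{L^2}$, whereas the $c$-orthonormal basis definition records only the diagonal value $1$; to match the two honestly one has to identify the content of each side with the single operator equation $T^{*}T=I$, and along the way to handle the routine but genuine nuisance that the $\mu$-null set hidden in ``a.e.\ $\nu$'' may depend on the test vectors $\{f_\omega\}$ and $g_\nu$ — which I would control via separability of the spaces $H_\nu$ and of $\big(\oplus_{\omega\in\Omega}H_\omega,\mu\big)_{L^2}$. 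All the remaining bookkeeping — Tonelli on $\mu\times l$, the index set $\mathbb{K}=\bigcup_{\omega\in\Omega}\mathbb{K}_\omega$, and the expansion $\Lambda^{*}_\omega f_\omega=\sum_{k\in\mathbb{K}_\omega}\langle f_\omega,e_{\omega,k}\rangle u_{\omega,k}$ — is the machinery already assembled in the Remark, which I would simply quote.
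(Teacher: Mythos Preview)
Your treatment of the frame, Bessel and tight cases is exactly the paper's: both simply quote identity~(\ref{uu}). For the Riesz basis case your argument is also essentially the paper's, only packaged more cleanly: the paper never names your unitary $U$, but it builds by hand the assignments $\varphi\mapsto\psi$ with $\psi(\omega)=\sum_{k\in\mathbb{K}_\omega}\varphi(\omega,k)e_{\omega,k}$ (your $U^{-1}$) and $\phi\mapsto\varphi$ with $\varphi(\omega,k)=\langle\phi(\omega),e_{\omega,k}\rangle$ (your $U$), checks $\|\psi\|=\|\varphi\|$, and then shows $\mathfrak T\varphi=T_\Lambda\psi$, which is your identity $T_\Lambda=T_uU$ in disguise. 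Your single operator identity does the two directions at once and is tidier; the content is the same.

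The genuine difference is the orthonormal basis case. You aim for the characterisation ``$cg$-orthonormal basis $\Leftrightarrow$ $T_\Lambda$ unitary'' and ``$c$-orthonormal basis $\Leftrightarrow$ $T_u$ unitary'', and you are right to worry: the paper's condition~(ii) for a $c$-orthonormal basis only asks that $\int\langle f(\omega),f(\nu)\rangle\,d\mu(\omega)=1$ a.e., which is not visibly the same as $T_u^{*}T_u=I$ (for instance, $\mathbf 1$ need not lie in $L^2$), so the last equivalence in your chain is the soft spot you identified. The paper sidesteps this entirely: it never interprets either condition~(ii) as $T^{*}T=I$, but instead verifies the $c$-orthonormal condition for $u$ directly, writing
\[
\int_{\Omega}\!\int_{\mathbb K}\langle u_{\omega,k},u_{\nu,m}\rangle\,dl(k)\,d\mu(\omega)
=\int_{\mathbb K}\Big(\int_{\Omega}\langle\Lambda_\omega^{*}e_{\omega,k},\Lambda_\nu^{*}e_{\nu,m}\rangle\,d\mu(\omega)\Big)dl(k)
=\int_{\mathbb K}\langle e_{\nu,k},e_{\nu,m}\rangle\,dl(k)=1,
\]
using Fubini and the $cg$-orthonormal condition~(ii) with $f_\omega=e_{\omega,k}$, $g_\nu=e_{\nu,m}$; the Parseval identity~(\ref{uu}) gives condition~(iii). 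So for this last case the paper's direct computation is the safer route, while your operator-theoretic reformulation, though more conceptual, runs into the exact obstacle you anticipated.
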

\begin{proof}
We see from (\ref{uu}) that
$$\int_{\Omega} \|\Lambda_\omega h\|^2 d\mu(\omega)=\int_{\Omega} \big(\int_{\mathbb{K}}|\langle h,u_{\omega,k}\rangle|^2 dl(k)\big) d\mu(\omega),\quad h\in H.$$
Hence  $\{\Lambda_\omega\}_{\omega \in \Omega}$ is a  $cg$-frame (respectively, $cg$-Bessel family, tight $cg$-frame) for $H$ if and only if $\{u_{\omega,k}\}_{\omega \in \Omega,k\in \mathbb{K}_\omega}$ is a $c$-frame (respectively, $c$-Bessel family, tight $c$-frame).
\\Now, assume that $\{\Lambda_\omega\}_{\omega \in \Omega}$ is a  $cg$-Riesz basis for $H$.
So there are constants $A,B>0$ such that the operator
$T_\Lambda: (\oplus_{\omega \in\Omega}H_\omega,\mu)_{L^{2}}\longrightarrow H$ defined by
$$\langle T_\Lambda F, h\rangle=\int_{\Omega} \langle \Lambda^*_\omega F(\omega),h\rangle d\mu(\omega),\quad F\in(\oplus_{\omega \in\Omega}H_\omega,\mu)_{L^{2}},h\in H,$$
satisfies in
$$A\|F\|\leq \|T_\Lambda F\|\leq B\|F\|,~ F\in (\oplus_{\omega \in\Omega}H_\omega,\mu)_{L^{2}}.$$
Consider the operator $\mathfrak{T}: L^2(\Omega\times\mathbb{K})\longrightarrow H$ which is defined by
\begin{align*}
\langle \mathfrak{T} \varphi, h\rangle=&\int_{\Omega} \int_{\mathbb{K}}\varphi(\omega,k) \langle  u_{\omega,k}, h\rangle dl(k) d\mu(\omega)
\\=&\int_{\Omega} \sum_{k\in \mathbb{K}_\omega} \varphi(\omega,k) \langle  u_{\omega,k}, h\rangle  d\mu(\omega)
,\quad \varphi\in L^2(\Omega\times\mathbb{K}),h\in H.
\end{align*}
To show that $\{u_{\omega,k}\}_{k\in \mathbb{K}_\omega}$ is a $c$-Riesz basis for $H$, it is enough to show that
 $\mathfrak{T}$ is one-to-one (by Theorem 2.1 in \cite{Ra}). If $\mathfrak{T} \varphi=0$, then for each $h\in H$,
\begin{align*}
0=\langle \mathfrak{T} \varphi, h\rangle
=&\int_{\Omega} \sum_{k\in \mathbb{K}_\omega} \varphi(\omega,k) \langle  \Lambda^*_\omega e_{\omega,k}, h\rangle  d\mu(\omega)
\\=&\int_{\Omega} \langle\sum_{k\in \mathbb{K}_\omega} \varphi(\omega,k)  e_{\omega,k},  \Lambda_\omega h\rangle  d\mu(\omega)
\\=&\int_{\Omega} \langle \Lambda^*_\omega\psi,h\rangle  d\mu(\omega)=\langle T_\Lambda\psi,h\rangle=0,
\end{align*}
where $\psi(\omega)= \sum_{k\in \mathbb{K}_\omega} \varphi(\omega,k)  e_{\omega,k},~\omega\in \Omega$. So $T_\Lambda\psi=0$. Since $T_\Lambda$ is bounded below,
$\psi=0$. But $\|\psi\|=\|\varphi\|$, so $\varphi=0$. Hence $\mathfrak{T}$ is one-to-one and it implies $\{u_{\omega,k}\}_{k\in \mathbb{K}_\omega}$ is a $c$-Riesz basis for $H$.
\\Now, let $\{u_{\omega,k}\}_{\omega \in\Omega,k\in \mathbb{K}_\omega}$ be a  $c$-Riesz basis for $H$. By Theorem 3.3 in \cite{mad}, it suffices to show that $T_\Lambda$ is one-to-one. If $T_\Lambda\phi=0$, then for each $h\in H$,
\begin{align*}
0=\langle T_\Lambda\phi, h\rangle=&\int_{\Omega}\langle\Lambda^*_\omega\phi(\omega),h\rangle d\mu(\omega)
=\int_{\Omega} \langle \Lambda^*_\omega(\sum_{k\in \mathbb{K}_\omega} \langle\phi(\omega),  e_{\omega,k}\rangle  e_{\omega,k}),h\rangle d\mu(\omega)
\\=&\int_{\Omega} \sum_{k\in \mathbb{K}_\omega} \langle\phi(\omega),  e_{\omega,k}\rangle  \langle \Lambda^*_\omega e_{\omega,k},h\rangle  d\mu(\omega)
=(*),
\end{align*}
set $\varphi(\omega,k)=\langle\phi(\omega),  e_{\omega,k}\rangle$, $\omega \in \Omega,k\in \mathbb{K}_\omega$, then
$$\int_{\Omega} \sum_{k\in \mathbb{K}_\omega} |\langle\phi(\omega),  e_{\omega,k}\rangle|^2    d\mu(\omega)=\int_{\Omega} \|\phi(\omega)\|^2 d\mu(\omega)=\|\phi\|^2.$$
So $\varphi\in L^2(\Omega,\mathbb{K})$ and $\|\varphi\|=\|\phi\|$. Also
$$(*)=\int_{\Omega}\int_{\mathbb{K}}\varphi(\omega,k)\langle \Lambda^*_\omega e_{\omega,k},h\rangle dl(k) d\mu(\omega).$$
So for each $h\in H$, $0=\langle \mathfrak{T} \varphi, h\rangle=0$, hence $\mathfrak{T} \varphi=0$.
Since $\{u_{\omega,k}\}_{\omega \in\Omega,k\in \mathbb{K}_\omega}$ is a  $c$-Riesz basis for $H$, so
$\mathfrak{T} $ is invertible, which implies $\varphi=0$ and $\phi=0$. Thus $T_\Lambda$ is one-to-one.
\\Now, suppose that $\{\Lambda_\omega\}_{\omega \in \Omega}$ is a $cg$-orthonormal basis for $H$. For almost all $\nu \in \Omega$ and all $m\in \mathbb{K}$,
\begin{align*}
\int_{\Omega}\int_{\mathbb{K}}\langle u_{\omega,k},u_{\nu,m}\rangle dl(k) d\mu(\omega)
&=\int_{\Omega}\int_{\mathbb{K}}\langle \Lambda^*_\omega e_{\omega,k},\Lambda^*_\nu e_{\nu,m}\rangle dl(k) d\mu(\omega)
\\&=\int_{\mathbb{K}}\int_{\Omega}\langle \Lambda^*_\omega e_{\omega,k},\Lambda^*_\nu e_{\nu,m}\rangle d\mu(\omega) dl(k)
\\&=\int_{\mathbb{K}}\langle e_{\nu,k},e_{\nu,m}\rangle dl(k) =1.
\end{align*}
Also, for each $h \in H$,
$$\int_{\Omega}\|\Lambda_\omega h\|^2 d\mu(\omega)=\int_{\Omega}\int_{\mathbb{K}}|\langle h,u_{\omega,k}\rangle|^2 dl(k) d\mu(\omega)=\|h\|^2.$$
So $\{u_{\omega,k}\}_{\omega \in\Omega,k\in \mathbb{K}_\omega}$ is a $c$-orthonormal basis.
The opposite implications are similar.
\end{proof}


\section{$cg$-Orthonormal bases and $cg$-frames}

At first, we present some result on $cg$-frames which are constructed by composing with operators.
\begin{proposition}
Let $\{\Lambda_\omega\}_{\omega \in \Omega}$ be a $cg$-frame for $H$ with bounds $A$ and $B$ and $V \in B(H)$. Then $\{\Lambda_\omega V\}_{\omega \in \Omega}$
is a $cg$-frame for $H$ if and only if there exists a positive constat $\alpha$ such that
$$\|Vf\|^2\geq \alpha \|f\|^2, \quad f \in H.$$
\end{proposition}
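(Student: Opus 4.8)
The plan is to prove both implications by analyzing what the $cg$-frame inequality for $\{\Lambda_\omega V\}_{\omega\in\Omega}$ says after unwinding the definitions. The starting observation is that for every $f\in H$,
$$\int_\Omega \|\Lambda_\omega Vf\|^2\,d\mu(\omega)$$
is exactly the expression appearing in the middle of the $cg$-frame inequality $(ii)$ applied to the vector $Vf$ rather than $f$. Since $\{\Lambda_\omega\}_{\omega\in\Omega}$ is a $cg$-frame with bounds $A$ and $B$, we immediately get the two-sided estimate
$$A\|Vf\|^2 \;\le\; \int_\Omega \|\Lambda_\omega Vf\|^2\,d\mu(\omega)\;\le\; B\|Vf\|^2 \;\le\; B\|V\|^2\|f\|^2,$$
valid for all $f\in H$. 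The rightmost inequality already furnishes the upper $cg$-frame bound $B\|V\|^2$ for $\{\Lambda_\omega V\}_{\omega\in\Omega}$ with no extra hypothesis, and the strong measurability of $\{\Lambda_\omega Vf\}_{\omega\in\Omega}$ follows because $\Lambda_\omega V\in B(H,H_\omega)$ and composing the strongly measurable family $\{\Lambda_\omega(\cdot)\}$ with the fixed bounded operator $V$ preserves measurability. So the whole content of the statement is concentrated in the lower bound.

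For the forward direction, assume $\{\Lambda_\omega V\}_{\omega\in\Omega}$ is a $cg$-frame, say with lower bound $C>0$. Then for every $f\in H$,
$$C\|f\|^2 \;\le\; \int_\Omega \|\Lambda_\omega Vf\|^2\,d\mu(\omega)\;\le\; B\|Vf\|^2,$$
using the already-noted upper estimate in terms of $\|Vf\|^2$. Dividing by $B$ gives $\|Vf\|^2\ge (C/B)\|f\|^2$, so $\alpha := C/B>0$ works. For the converse, assume $\|Vf\|^2\ge\alpha\|f\|^2$ for all $f\in H$ with $\alpha>0$. Combining this with the left-hand inequality in the displayed two-sided estimate above yields
$$\int_\Omega \|\Lambda_\omega Vf\|^2\,d\mu(\omega)\;\ge\; A\|Vf\|^2\;\ge\; A\alpha\|f\|^2,$$
so $A\alpha$ is a valid lower bound; together with the upper bound $B\|V\|^2$ and the measurability remark, $\{\Lambda_\omega V\}_{\omega\in\Omega}$ is a $cg$-frame for $H$.

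There is no genuine obstacle here: the argument is a direct substitution of $Vf$ into the frame inequality of $\{\Lambda_\omega\}$ combined with the elementary fact that a lower bound $\|Vf\|\ge\sqrt\alpha\|f\|$ transfers between the quantities $\|f\|^2$ and $\|Vf\|^2$ up to the constants $A,B$. The only point that deserves a sentence of care is the measurability clause in the definition of a $cg$-frame, which I would dispatch by remarking that for fixed $h\in H$ the map $\omega\mapsto (\Lambda_\omega V)h = \Lambda_\omega(Vh)$ is strongly measurable since it is the strongly measurable family $\{\Lambda_\omega(\cdot)\}$ evaluated at the fixed vector $Vh\in H$. One might also note in passing that the condition $\|Vf\|^2\ge\alpha\|f\|^2$ is equivalent to $V$ being bounded below, i.e. injective with closed range, so the proposition says $\{\Lambda_\omega V\}$ is a $cg$-frame precisely when $V$ is bounded below.
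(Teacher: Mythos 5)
Your proof is correct and follows essentially the same route as the paper's: substitute $Vf$ into the frame inequality for $\{\Lambda_\omega\}$ to get $A\|Vf\|^2\le\int_\Omega\|\Lambda_\omega Vf\|^2\,d\mu(\omega)\le B\|Vf\|^2$, derive $\alpha=C/B$ in the forward direction, and use $A\alpha$ and $B\|V\|^2$ as bounds in the converse. Your added remark on strong measurability of $\{\Lambda_\omega Vh\}_{\omega\in\Omega}$ is a small point the paper leaves implicit, but the argument is the same.
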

\begin{proof}
Suppose  $\{\Lambda_\omega V\}_{\omega \in \Omega}$ is a $cg$-frame for $H$ with bounds $C$ and $D$. For each $f \in H$,
$$C\|f\|^2\leq \int_{\Omega} \|\Lambda_\omega Vf\|^2 d\mu(\omega) \leq B \|Vf\|^2,$$
so $\|Vf\|^2\geq\frac{C}{B}\|f\|^2$. Set $\alpha=\frac{C}{B}$, then the proof is done.
\\Conversely, let $\alpha$ be such that $$\|Vf\|^2\geq \alpha \|f\|^2, \quad f \in H.$$ For each $f \in H$,
$$ \int_{\Omega} \|\Lambda_\omega Vf\|^2 d\mu(\omega) \leq B \|Vf\|^2\leq B \|V\|^2  \|f\|^2,$$
and
$$ \int_{\Omega} \|\Lambda_\omega Vf\|^2 d\mu(\omega) \geq A \|Vf\|^2\geq A\alpha  \|f\|^2.$$
Hence $\{\Lambda_\omega V\}_{\omega \in \Omega}$ is a $cg$-frame for $H$.
\end{proof}
\begin{corollary}\label{2-2}
Let $\{\Lambda_\omega\}_{\omega \in \Omega}$ be a $cg$-frame for $H$  and $V \in B(H)$. Then $\{\Lambda_\omega V\}_{\omega \in \Omega}$ is a $cg$-frame for $H$ if and only if $V^*$ is onto.
\end{corollary}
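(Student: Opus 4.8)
The plan is to reduce the statement entirely to the Proposition immediately preceding it. That Proposition says that $\{\Lambda_\omega V\}_{\omega\in\Omega}$ is a $cg$-frame for $H$ if and only if $V$ is \emph{bounded below}, i.e. there is $\alpha>0$ with $\|Vf\|^2\geq\alpha\|f\|^2$ for all $f\in H$. Hence the corollary is equivalent to the purely operator-theoretic assertion that $V$ is bounded below if and only if $V^*$ is onto, and I would prove this by exhibiting explicit one-sided inverses rather than quoting the closed-range theorem, so as to stay within the material already available in the excerpt.

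For the direction ``bounded below $\Rightarrow$ $V^*$ onto'': if $\|Vf\|^2\geq\alpha\|f\|^2$ for all $f$, then $V^*V\geq\alpha I$, so $V^*V$ is a positive, invertible operator. Then $W:=(V^*V)^{-1}V^*$ is a bounded left inverse of $V$ (indeed $WV=(V^*V)^{-1}(V^*V)=I$), and taking adjoints, $W^*=V(V^*V)^{-1}$ satisfies $V^*W^*=(WV)^*=I$, i.e. $W^*$ is a bounded right inverse of $V^*$. In particular $V^*$ is surjective.

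For the converse ``$V^*$ onto $\Rightarrow$ bounded below'': since $V^*$ is onto, $R(V^*)=H$ is closed, so Lemma \ref{pseudo} provides a bounded operator $(V^*)^{\dagger}$ with $V^*(V^*)^{\dagger}=I$ on $R(V^*)=H$. Passing to adjoints gives a bounded operator $L:=\big((V^*)^{\dagger}\big)^*$ with $LV=I$, whence $\|f\|=\|LVf\|\leq\|L\|\,\|Vf\|$ for every $f\in H$, i.e. $\|Vf\|^2\geq\|L\|^{-2}\|f\|^2$; so $V$ is bounded below with constant $\alpha=\|L\|^{-2}>0$. Feeding this (in both directions) into the preceding Proposition yields the claimed equivalence.

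The only real obstacle is precisely the equivalence ``$V$ bounded below $\Longleftrightarrow$ $V^*$ onto'', which is classical; once it is established via the explicit one-sided inverses above (using invertibility of $V^*V$ in one direction and the pseudo-inverse of Lemma \ref{pseudo} in the other), the corollary follows immediately and no further computation is needed.
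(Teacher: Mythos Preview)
Your proof is correct and follows essentially the same route as the paper: both reduce to the preceding Proposition and then invoke the standard equivalence ``$V$ bounded below $\Longleftrightarrow$ $V^{*}$ surjective''. The only difference is that the paper dispatches this equivalence in one line by citing Lemma~2.4.1(iii) of \cite{chris}, whereas you unfold its proof explicitly using the pseudo-inverse Lemma~\ref{pseudo} (itself taken from \cite{chris}); this makes your argument more self-contained but not substantively different.
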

\begin{proof}
By Lemma 2.4.1 $(iii)$ in \cite{chris}, it is obvious.
\end{proof}
\begin{corollary}
Let $M$ be a close subspace of $H$ and $\{\Lambda_\omega\}_{\omega \in \Omega}$ be a $cg$-frame for $H$  and $V \in B(H,M)$. Then $\{\Lambda_\omega V^*\}_{\omega \in \Omega}$
is a $cg$-frame for $M$ if and only if there exists a positive constat $\alpha$ such that
$$\|V^*f\|^2\geq \alpha \|f\|^2, \quad f \in M.$$
\end{corollary}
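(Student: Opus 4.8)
The plan is to reduce the statement to the preceding proposition, read over the closed subspace $M$ with the bounded operator $V^{*}\in B(M,H)$ in place of $V$. First I would settle the bookkeeping of domains: since $V\in B(H,M)$ we have $V^{*}\in B(M,H)$, hence $\Lambda_{\omega}V^{*}\in B(M,H_{\omega})$ for every $\omega$, and because $\{\Lambda_{\omega}g\}_{\omega\in\Omega}$ is strongly measurable for each $g\in H$, the family $\{\Lambda_{\omega}V^{*}f\}_{\omega\in\Omega}$ is strongly measurable for each $f\in M$. Next, applying the $cg$-frame inequalities for $\{\Lambda_{\omega}\}_{\omega\in\Omega}$ (with bounds $A,B$) to the vector $V^{*}f\in H$ yields
\[
A\|V^{*}f\|^{2}\leq\int_{\Omega}\|\Lambda_{\omega}V^{*}f\|^{2}\,d\mu(\omega)\leq B\|V^{*}f\|^{2},\qquad f\in M,
\]
which is the key estimate driving both implications.

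For the forward direction, suppose $\{\Lambda_{\omega}V^{*}\}_{\omega\in\Omega}$ is a $cg$-frame for $M$ with lower bound $C$. Combining its lower frame inequality with the upper bound in the displayed estimate gives $C\|f\|^{2}\leq B\|V^{*}f\|^{2}$ for all $f\in M$, so $\alpha:=C/B$ works.

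For the converse, assume $\|V^{*}f\|^{2}\geq\alpha\|f\|^{2}$ for all $f\in M$. The upper bound in the displayed estimate gives $\int_{\Omega}\|\Lambda_{\omega}V^{*}f\|^{2}\,d\mu(\omega)\leq B\|V^{*}f\|^{2}\leq B\|V^{*}\|^{2}\|f\|^{2}$, while the lower bound together with the hypothesis gives $\int_{\Omega}\|\Lambda_{\omega}V^{*}f\|^{2}\,d\mu(\omega)\geq A\|V^{*}f\|^{2}\geq A\alpha\|f\|^{2}$. Together with the measurability observed above, this shows $\{\Lambda_{\omega}V^{*}\}_{\omega\in\Omega}$ is a $cg$-frame for $M$ with bounds $A\alpha$ and $B\|V^{*}\|^{2}$.

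I do not expect a genuine obstacle: the argument is essentially a transcription of the previous proposition's proof, and the only point needing a moment's care is that the frame inequalities for $\{\Lambda_{\omega}\}_{\omega\in\Omega}$, which a priori hold on all of $H$, are here used only on the subspace $V^{*}(M)\subseteq H$, together with inheriting the strong measurability condition from $\{\Lambda_{\omega}\}_{\omega\in\Omega}$ to $\{\Lambda_{\omega}V^{*}\}_{\omega\in\Omega}$.
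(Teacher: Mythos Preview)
Your proposal is correct and follows exactly the approach the paper intends: the corollary is stated without proof because it is the immediate analogue of the preceding proposition, obtained by replacing $H$ with the closed subspace $M$ and $V\in B(H)$ with $V^{*}\in B(M,H)$, and your argument is precisely a transcription of that proposition's proof with these substitutions. The bookkeeping on domains and strong measurability that you added is the only extra care needed, and you handled it correctly.
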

\begin{corollary}
Let $\{\Lambda_\omega\}_{\omega \in \Omega}$ be a tight $cg$-frame for $H$  with frame bound $A$ and $V \in B(H)$. Then $\{\Lambda_\omega V\}_{\omega \in \Omega}$ is a tight $cg$-frame for $H$ with frame bound $\alpha$ if and only if
$$\|Vf\|^2= \frac{\alpha}{A} \|f\|^2, \quad f \in H.$$
\end{corollary}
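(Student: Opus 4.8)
The plan is to reduce everything to the tightness identity. Since $\{\Lambda_\omega\}_{\omega\in\Omega}$ is a tight $cg$-frame with bound $A$, inequality (\ref{ccg}) collapses to the equality $\int_{\Omega}\|\Lambda_\omega g\|^{2}d\mu(\omega)=A\|g\|^{2}$ for every $g\in H$. Substituting $g=Vf$ for an arbitrary $f\in H$ yields
$$\int_{\Omega}\|\Lambda_\omega Vf\|^{2}d\mu(\omega)=A\|Vf\|^{2},\qquad f\in H,$$
and this single identity drives both directions of the equivalence, so the argument is short.

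For the forward implication, assume $\{\Lambda_\omega V\}_{\omega\in\Omega}$ is a tight $cg$-frame with bound $\alpha$, so that $\int_{\Omega}\|\Lambda_\omega Vf\|^{2}d\mu(\omega)=\alpha\|f\|^{2}$ for all $f\in H$. Comparing this with the displayed identity gives $A\|Vf\|^{2}=\alpha\|f\|^{2}$ for all $f\in H$; since $A>0$ (being a lower $cg$-frame bound), we may divide to obtain $\|Vf\|^{2}=\frac{\alpha}{A}\|f\|^{2}$.

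For the converse, suppose $\|Vf\|^{2}=\frac{\alpha}{A}\|f\|^{2}$ for all $f\in H$. First note that for each $h\in H$ the family $\{\Lambda_\omega Vh\}_{\omega\in\Omega}=\{\Lambda_\omega(Vh)\}_{\omega\in\Omega}$ is strongly measurable, because $Vh\in H$ and $\{\Lambda_\omega\}_{\omega\in\Omega}$ is a $cg$-frame. Plugging the hypothesis into the displayed identity, $\int_{\Omega}\|\Lambda_\omega Vf\|^{2}d\mu(\omega)=A\cdot\frac{\alpha}{A}\|f\|^{2}=\alpha\|f\|^{2}$, so $\{\Lambda_\omega V\}_{\omega\in\Omega}$ satisfies (\ref{ccg}) with lower and upper bound both equal to $\alpha$, i.e.\ it is a tight $cg$-frame with frame bound $\alpha$. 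There is essentially no obstacle in this corollary; the only points needing a word of care are that the lower bound $A$ is strictly positive, so the division is legitimate, and the routine verification of strong measurability, both of which are immediate.
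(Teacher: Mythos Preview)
Your proof is correct. The paper states this result as a corollary with no explicit proof, intending it to follow immediately from the preceding proposition on $cg$-frames of the form $\{\Lambda_\omega V\}_{\omega\in\Omega}$; your direct computation via the tightness identity $\int_\Omega\|\Lambda_\omega g\|^2\,d\mu(\omega)=A\|g\|^2$ is exactly the specialization of that proposition's argument to the tight case, so the approaches coincide.
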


\begin{proposition}\label{004}
Suppose $\{\Theta_\omega\}_{\omega \in \Omega}$ is a $cg$-orthonormal basis for $H$ with respect to $\{H_\omega\}_{\omega \in \Omega}$   and  $\{\Lambda_\omega\}_{\omega \in \Omega}$  is  a $cg$-frame for $H$ with respect to $\{H_\omega\}_{\omega \in \Omega}$. Then there exists a bounded and one-to-one operator $V$ on $H$ such that
$\Lambda_\omega=\Theta_\omega V$, $a.e.~ [\mu]$.
Furthermore, $V$ is invertible if $\{\Lambda_\omega\}_{\omega \in \Omega}$ is a $cg$-Riesz basis for $H$  and $V$ is unitary if
 $\{\Lambda_\omega\}_{\omega \in \Omega}$ is a $cg$-orthonormal  basis for $H$.
\end{proposition}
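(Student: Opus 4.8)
The existence statement is almost a restatement of Theorem~\ref{ob1}. Since $\{\Lambda_\omega\}_{\omega\in\Omega}$ is a $cg$-frame for $H$ with some bounds $A,B>0$, that theorem supplies a (unique) operator $V\in B(H)$ with $\Lambda_\omega=\Theta_\omega V$, $a.e.~[\mu]$, and with $AI\le V^*V\le BI$. From $A\langle f,f\rangle\le\langle V^*Vf,f\rangle=\|Vf\|^2$ we get $\|Vf\|^2\ge A\|f\|^2$ for all $f\in H$, so $V$ is bounded below and hence one-to-one. Thus the first assertion follows by quoting Theorem~\ref{ob1} and reading off injectivity from the lower estimate; no further work is needed here.

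For the case when $\{\Lambda_\omega\}_{\omega\in\Omega}$ is a $cg$-Riesz basis, the plan is to pass to synthesis operators. A computation identical to the one in the proof of the $cg$-Riesz basis theorem above gives $T_\Lambda=V^*T_\Theta$, where $T_\Theta$, $T_\Lambda$ are the synthesis operators of $\{\Theta_\omega\}$, $\{\Lambda_\omega\}$ (Proposition~\ref{Tg}). Conditions $(ii)$ and $(iii)$ in the definition of a $cg$-orthonormal basis say exactly that $T_\Theta^*T_\Theta=I$ and $T_\Theta T_\Theta^*=S_\Theta=I$; indeed, using $T_\Theta^*(h)(\nu)=\Theta_\nu h$ one checks $(T_\Theta^*T_\Theta\varphi)(\nu)=\Theta_\nu\int_\Omega\Theta_\omega^*\varphi(\omega)\,d\mu(\omega)=\varphi(\nu)$ for a.e.\ $\nu$. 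Hence $T_\Theta$ is unitary, while $T_\Lambda$ is invertible because $\{\Lambda_\omega\}$ is a $cg$-Riesz basis (Lemma~3.2$(i)$ of \cite{mad}); therefore $V^*=T_\Lambda T_\Theta^{-1}$ is invertible, and so is $V$. Alternatively, one observes that a $cg$-orthonormal basis is in particular a $cg$-Riesz basis (its synthesis operator being unitary) and then invokes the theorem on $cg$-Riesz bases composed with operators proved above.

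If moreover $\{\Lambda_\omega\}_{\omega\in\Omega}$ is a $cg$-orthonormal basis, then $\{\Theta_\omega V\}_{\omega\in\Omega}=\{\Lambda_\omega\}_{\omega\in\Omega}$ is a $cg$-orthonormal basis while $\{\Theta_\omega\}_{\omega\in\Omega}$ is a $cg$-orthonormal basis, so Theorem~\ref{ob2} applies directly and forces $V$ to be unitary (equivalently, in the identity $T_\Lambda=V^*T_\Theta$ both $T_\Lambda$ and $T_\Theta$ are now unitary, hence so is $V^*$). The only genuinely delicate point in the whole argument is the middle paragraph: carefully justifying the a.e.\ identities that turn conditions $(ii)$--$(iii)$ into $T_\Theta^*T_\Theta=T_\Theta T_\Theta^*=I$, and making sure the invertibility of $T_\Lambda$ for a $cg$-Riesz basis is available under the present hypotheses (it is established in \cite{mad} under $\sigma$-finiteness of $\mu$, which may have to be added to the statement or circumvented). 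Everything else is bookkeeping with the operators $T_\Theta$, $T_\Lambda$ and citation of results already proved.
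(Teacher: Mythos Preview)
Your proposal is correct and follows essentially the same route as the paper: the first assertion via Theorem~\ref{ob1}, the $cg$-orthonormal basis case via Theorem~\ref{ob2}, and the $cg$-Riesz basis case by passing to synthesis operators and invoking results from \cite{mad}. The only cosmetic difference is that the paper writes $V=T_\Theta T_\Lambda^*$ and argues $V$ is onto because $T_\Lambda^*$ is onto (Theorem~3.3 in \cite{mad}), whereas you take adjoints and argue $V^*=T_\Lambda T_\Theta^{-1}$ is invertible; these are the same argument up to adjoints. Your observation that $T_\Theta$ is unitary (from conditions $(ii)$--$(iii)$) is correct and makes explicit something the paper leaves implicit, and your caution about $\sigma$-finiteness for the \cite{mad} citation is a fair point that applies equally to the paper's own proof.
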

\begin{proof}
By the proof of Theorem \ref{ob1}, the first part is obvious.
If $\{\Lambda_\omega\}_{\omega \in \Omega}$ is a $cg$-Riesz basis for $H$, then by definition of $V$ in the proof of Theorem \ref{ob1}, $V=T_\Theta T^*_\Lambda$. Theorem 3.3 in \cite{mad} implies $T^*_\Lambda$ is onto, So $V$ is onto and consequently  $V$ is invertible.
If $\{\Lambda_\omega\}_{\omega \in \Omega}$ is a $cg$-orthonormal for $H$, then Theorem \ref{ob2} implies the result.
\end{proof}

\begin{proposition}\label{09}
Suppose that $\{\Theta_\omega\}_{\omega \in \Omega}$ is a $cg$-orthonormal basis  for $H$ with respect to $\{H_\omega\}_{\omega \in \Omega}$  and  $\{\Lambda_\omega\}_{\omega \in \Omega}$  is  a $cg$-frame  for $H$ with respect to $\{H_\omega\}_{\omega \in \Omega}$. Then there exist $cg$-orthonormal bases $\{\Psi_\omega\}_{\omega \in \Omega}$, $\{\Gamma_\omega\}_{\omega \in \Omega}$ and  $\{\Phi_\omega\}_{\omega \in \Omega}$  for $H$ with respect to $\{H_\omega\}_{\omega \in \Omega}$   and a constant $\alpha$  such that $$\Lambda_\omega=\alpha(\Psi_\omega+\Gamma_\omega+\Phi_\omega), ~ a.e.~ [\mu].$$
\end{proposition}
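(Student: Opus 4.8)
The plan is to reduce everything to Proposition~\ref{06}(i) via the operator representation of $cg$-frames established earlier. First I would invoke Proposition~\ref{004}: since $\{\Theta_\omega\}_{\omega\in\Omega}$ is a $cg$-orthonormal basis and $\{\Lambda_\omega\}_{\omega\in\Omega}$ is a $cg$-frame for $H$, there exists a bounded, one-to-one operator $V\in B(H)$ with $\Lambda_\omega=\Theta_\omega V$, $a.e.~[\mu]$. This is the structural input that converts the problem about families of operators into a problem about a single operator on $H$.

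Next I would apply Proposition~\ref{06}(i) to $K=V$: we can write $V=\alpha(U_1+U_2+U_3)$, where each $U_j\in B(H)$, $j=1,2,3$, is unitary and $\alpha$ is a constant. Substituting into $\Lambda_\omega=\Theta_\omega V$ gives, for almost all $\omega\in\Omega$,
$$\Lambda_\omega=\Theta_\omega\big(\alpha(U_1+U_2+U_3)\big)=\alpha(\Theta_\omega U_1+\Theta_\omega U_2+\Theta_\omega U_3).$$
Then I would set $\Psi_\omega=\Theta_\omega U_1$, $\Gamma_\omega=\Theta_\omega U_2$, $\Phi_\omega=\Theta_\omega U_3$, so that $\Lambda_\omega=\alpha(\Psi_\omega+\Gamma_\omega+\Phi_\omega)$, $a.e.~[\mu]$, as required.

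Finally I would verify that each of $\{\Psi_\omega\}_{\omega\in\Omega}$, $\{\Gamma_\omega\}_{\omega\in\Omega}$, $\{\Phi_\omega\}_{\omega\in\Omega}$ is a $cg$-orthonormal basis: since $\{\Theta_\omega\}_{\omega\in\Omega}$ is a $cg$-orthonormal basis and each $U_j$ is unitary, Theorem~\ref{ob2} applies directly and yields that $\{\Theta_\omega U_j\}_{\omega\in\Omega}$ is again a $cg$-orthonormal basis for $H$ with respect to $\{H_\omega\}_{\omega\in\Omega}$ (the strong measurability of $\{\Theta_\omega U_j h\}_{\omega\in\Omega}$ for $h\in H$ follows from that of $\{\Theta_\omega h\}_{\omega\in\Omega}$ applied to $U_j h$). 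I do not anticipate a serious obstacle here: the only points needing a word of care are that $V$ is genuinely only one-to-one (not necessarily invertible), which is harmless because Proposition~\ref{06}(i) imposes no invertibility hypothesis, and that all the identities hold only $a.e.~[\mu]$, which is consistent with the statement. The content of the proof is essentially the combination of Proposition~\ref{004}, Proposition~\ref{06}(i), and Theorem~\ref{ob2}.
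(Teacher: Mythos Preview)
Your proposal is correct and follows essentially the same route as the paper: invoke Proposition~\ref{004} to write $\Lambda_\omega=\Theta_\omega V$, apply Proposition~\ref{06}(i) to decompose $V=\alpha(U_1+U_2+U_3)$ with each $U_j$ unitary, and set $\Psi_\omega=\Theta_\omega U_1$, $\Gamma_\omega=\Theta_\omega U_2$, $\Phi_\omega=\Theta_\omega U_3$. The paper simply declares it ``obvious'' that each $\{\Theta_\omega U_j\}_{\omega\in\Omega}$ is a $cg$-orthonormal basis, whereas you correctly point to Theorem~\ref{ob2} for this step; otherwise the arguments coincide.
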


\begin{proof}
Due to Proposition \ref{004} and Proposition \ref{06}, we have an operator $V\in B(H)$ so that $V=\alpha(U_1+U_2+U_3)$, where each $U_{j},~j=1,2,3,$ is a unitary operator and $\alpha$ is a constant. Then
$\Lambda_\omega=\Theta_\omega V=\alpha (\Theta_\omega U_{1}+\Theta_\omega U_{2}+\Theta_\omega U_{3})$, $a.e.~ [\mu]$.
It is obvious that every $\{\Theta_\omega U_j\}_{\omega \in \Omega},~ j=1,2,3,$ is a $cg$-orthonormal basis for $H$.
Assuming $\Psi_\omega=\Theta_\omega U_1$, $a.e.~ [\mu]$, $\Gamma_\omega=\Theta_\omega U_2$, $a.e.~ [\mu]$ and
$\Phi_\omega=\Theta_\omega U_3$, $a.e.~ [\mu]$, the proof is completed.
\end{proof}

\begin{proposition}
Consider $\{\Theta_\omega \}_{\omega \in \Omega}$ as a $cg$-orthonormal basis for $H$ with respect to $\{H_\omega\}_{\omega \in \Omega}$. If $\{\Lambda_\omega\}_{\omega \in \Omega}$  is  a $cg$-Riesz basis   for $H$, then $\{\Lambda_\omega \}_{\omega \in \Omega}$ is sum of two $cg$-orthonormal bases for $H$ with respect to $\{H_\omega\}_{\omega \in \Omega}$.
\end{proposition}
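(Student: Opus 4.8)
The plan is to transfer the problem to a single operator on $H$, decompose that operator into unitaries, and push the decomposition back through the $cg$-orthonormal basis $\{\Theta_\omega\}_{\omega\in\Omega}$. Since $\{\Lambda_\omega\}_{\omega\in\Omega}$ is a $cg$-Riesz basis for $H$, Proposition~\ref{004} provides a bounded operator $V\in B(H)$ which is moreover \emph{invertible} and satisfies $\Lambda_\omega=\Theta_\omega V$, $a.e.~[\mu]$ (concretely $V=T_\Theta T_\Lambda^{*}$, as in the proof of Theorem~\ref{ob1}); this is the only place the Riesz-basis hypothesis is used.

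Next I would decompose $V$. Being invertible, $V$ is onto, so Proposition~\ref{06}$(ii)$ applies and gives unitaries $U_1,U_2\in B(H)$ and scalars $c_1,c_2$ with $V=c_1U_1+c_2U_2$. If one prefers the normalized form $V=\alpha(W_1+W_2)$ with $W_1,W_2$ unitary, one can argue instead from the polar decomposition $V=U\,|V|$: the positive operator $|V|/(\|V\|/2)$ has norm $\le 2$, hence equals $e^{iK}+e^{-iK}$ for a self-adjoint $K$ by functional calculus, so $V=\tfrac{\|V\|}{2}\big(Ue^{iK}+Ue^{-iK}\big)$ with $Ue^{\pm iK}$ unitary.

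Then, $a.e.~[\mu]$,
$$\Lambda_\omega=\Theta_\omega V=\alpha\big(\Theta_\omega W_1+\Theta_\omega W_2\big),$$
and by Theorem~\ref{ob2} each family $\{\Theta_\omega W_j\}_{\omega\in\Omega}$ is again a $cg$-orthonormal basis for $H$ with respect to $\{H_\omega\}_{\omega\in\Omega}$, since $W_j$ is unitary. Setting $\Psi_\omega=\Theta_\omega W_1$ and $\Gamma_\omega=\Theta_\omega W_2$ ($a.e.~[\mu]$) then exhibits $\{\Lambda_\omega\}_{\omega\in\Omega}$ as a sum of two $cg$-orthonormal bases, parallel to the three-term decomposition in Proposition~\ref{09}.

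The analytic content is light — everything reduces to Proposition~\ref{004}, Proposition~\ref{06}, and Theorem~\ref{ob2}. The real subtlety, and the step I expect to be the main obstacle, is the scalar: $c_j\Theta_\omega U_j$ is a bona fide $cg$-orthonormal basis only when $|c_j|=1$, and a generic invertible operator is not a sum of two unitaries (for instance $3I$ is not, since $\mathrm{Re}\,U$ cannot exceed $I$ in norm). Hence the constant $\alpha$ above cannot be dropped in general, and the statement must be read with that constant present, i.e.\ as ``$\Lambda_\omega=\alpha(\Psi_\omega+\Gamma_\omega)$, $a.e.~[\mu]$''; making this normalization explicit (rather than any genuine estimate) is where care is needed.
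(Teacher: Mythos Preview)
Your argument is correct and follows essentially the same route as the paper: invoke Proposition~\ref{004} to write $\Lambda_\omega=\Theta_\omega V$ with $V$ invertible, apply Proposition~\ref{06}$(ii)$ to decompose $V$ as a linear combination of two unitaries, and then use Theorem~\ref{ob2} (the paper phrases this last step as ``similar to the proof of Proposition~\ref{09}''). Your observation about the scalar is well taken and in fact matches the paper's own proof, which obtains $V=aU_1+bU_2$ rather than a bare sum; the paper's statement should indeed be read as a linear combination, exactly as you note.
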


\begin{proof}
Let $\{\Lambda_\omega\}_{\omega \in \Omega}$  is  a $cg$-Riesz basis for $H$.  Proposition \ref{004} implies that there exists an invertible  $V\in B(H)$ such that $\Lambda_\omega=\Theta_\omega V$, $a.e.~ [\mu]$. Via Proposition \ref{06}, $V$ can be written as $V=aU_1 +bU_2$, where $U_1$ and $U_2$ are unitary operators. The rest of proof is similar to the proof of Proposition \ref{09}.
\end{proof}

Composing of a $cg$-orthonormal basis and an isometry, gives us a Parseval $cg$-frame.

\begin{proposition}\label{004.4}
If $V\in B(H)$ is an isometry and $\{\Theta_\omega \}_{\omega \in \Omega}$ is a $cg$-orthonormal basis for $H$, then $\{\Theta_\omega \Lambda\}_{\omega \in \Omega}$ is a Parseval  $cg$-frame for $H$.
\end{proposition}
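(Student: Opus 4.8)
The plan is to recognize that this is just the ``converse'' implication already established in Proposition~\ref{t1}, now applied to the family defined by $\Lambda_\omega := \Theta_\omega V$. So the whole matter reduces to two routine verifications: that $\{\Theta_\omega V\}_{\omega\in\Omega}$ is an admissible family (strong measurability) and that it satisfies the Parseval inequalities.

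First I would check strong measurability. Since $\{\Theta_\omega\}_{\omega\in\Omega}$ is a $cg$-orthonormal basis, in particular $\{\Theta_\omega g\}_{\omega\in\Omega}$ is strongly measurable for every $g\in H$; taking $g=Vf$ shows that $\{\Theta_\omega Vf\}_{\omega\in\Omega}$ is strongly measurable for every $f\in H$. Thus $\{\Theta_\omega V\}_{\omega\in\Omega}$ is a genuine family in $B(H,H_\omega)$ to which the definition of a $cg$-frame applies.

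Next I would verify the Parseval identity by a direct computation. Fix $f\in H$. Applying condition $(iii)$ in the definition of a $cg$-orthonormal basis to the vector $Vf\in H$, and then using that $V$ is an isometry,
\[
\int_{\Omega}\|\Theta_\omega Vf\|^2\,d\mu(\omega)=\|Vf\|^2=\|f\|^2 .
\]
Hence both inequalities in (\ref{ccg}) hold with $A=B=1$, so $\{\Theta_\omega V\}_{\omega\in\Omega}$ is a Parseval $cg$-frame for $H$.

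There is no real obstacle here; the core of the argument is the one-line identity above. (Alternatively one could simply invoke the converse part of Proposition~\ref{t1}, observing that $V\in B(H)$ is an isometry with $\Theta_\omega V=\Theta_\omega V$ a.e.\ $[\mu]$ and that uniqueness of $V$ plays no role in that direction.) For self-containedness I would present the short direct computation rather than the citation.
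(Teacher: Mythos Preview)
Your proof is correct and is exactly the ``straightforward calculation'' the paper alludes to in lieu of a written proof; indeed, as you observe, it is the same computation used in the converse direction of Proposition~\ref{t1}. There is nothing to add.
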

\begin{proof}
A straightforward calculation gives the proof.
\end{proof}

Every bounded operator $V$ on $H$  has a representation in the form $V = U|V|$ (called
the \emph{polar decomposition} of $V$), where $U$ is a  partial isometry, $|V|$ is
a positive operator defined by $|V|=\sqrt{V^*V} $ and $ker U = ker V$.
\\Also, every positive operator $P$ on H with $\|P\|\leq 1$ can be written in the form $P =\frac{ 1}{2}(W+W^*)$,
where $W = P + i\sqrt{1-P^2}$ is unitary.
\\

Next theorem shows that we can represent a $cg$-frame by some Parseval $cg$-frames.

\begin{theorem}
Suppose that $\{\Theta_\omega \}_{\omega \in \Omega}$ is a $cg$-orthonormal basis for $H$. Every $cg$-frame for $H$ can be written as a linear combination of two  Parseval  $cg$-frames.
\end{theorem}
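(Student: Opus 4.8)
The plan is to reduce the statement to a factorization of a single bounded operator. Since $\{\Theta_\omega\}_{\omega\in\Omega}$ is a $cg$-orthonormal basis and $\{\Lambda_\omega\}_{\omega\in\Omega}$ is a $cg$-frame, Proposition \ref{004} supplies a bounded, one-to-one operator $V\in B(H)$ with $\Lambda_\omega=\Theta_\omega V$, $a.e.~[\mu]$. Hence it suffices to write $V$ in the form $V=\alpha(V_1+V_2)$ with $V_1,V_2$ isometries on $H$: then $\Lambda_\omega=\alpha(\Theta_\omega V_1+\Theta_\omega V_2)$, $a.e.~[\mu]$, and by Proposition \ref{004.4} each family $\{\Theta_\omega V_j\}_{\omega\in\Omega}$ is a Parseval $cg$-frame, so $\{\Lambda_\omega\}_{\omega\in\Omega}$ is exhibited as a linear combination of two Parseval $cg$-frames. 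Note that one cannot in general hope to take $V_1,V_2$ \emph{unitary} (that would force $V$ invertible by Proposition \ref{06}$(ii)$), so isometries are the right notion here.

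To obtain the factorization I would use the polar decomposition recalled just before the theorem: $V=U|V|$, where $|V|=\sqrt{V^*V}$ is positive and $U$ is a partial isometry with $\ker U=\ker V$. Because $V$ is one-to-one, $\ker U=\{0\}$, so $U$ is an isometry on all of $H$; this is the one place where the injectivity from Proposition \ref{004} is essential. Since $\|\,|V|\,\|^2=\|\,|V|^2\,\|=\|V^*V\|=\|V\|^2$, the operator $P:=|V|/\|V\|$ is positive with $\|P\|\le 1$ (and $\|V\|>0$ since a $cg$-frame is nontrivial), so by the remark preceding the theorem $P=\tfrac12(W+W^*)$ with $W=P+i\sqrt{1-P^2}$ unitary. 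Therefore
$$V=U|V|=\|V\|\,UP=\frac{\|V\|}{2}\bigl(UW+UW^*\bigr).$$

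Finally, since $U$ is an isometry and $W,W^*$ are unitary, $\|UWf\|=\|Wf\|=\|f\|$ and $\|UW^*f\|=\|W^*f\|=\|f\|$ for all $f\in H$, so $UW$ and $UW^*$ are isometries. Setting $V_1=UW$, $V_2=UW^*$ and $\alpha=\|V\|/2$ gives $V=\alpha(V_1+V_2)$, hence
$$\Lambda_\omega=\frac{\|V\|}{2}\bigl(\Theta_\omega UW+\Theta_\omega UW^*\bigr),\quad a.e.~[\mu],$$
and by Proposition \ref{004.4} the families $\{\Theta_\omega UW\}_{\omega\in\Omega}$ and $\{\Theta_\omega UW^*\}_{\omega\in\Omega}$ are Parseval $cg$-frames, completing the proof. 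The only genuinely delicate point is upgrading the partial isometry $U$ to an isometry; everything else (the norm identity, the positivity/unitarity bookkeeping, and the composition of an isometry with a unitary) is routine.
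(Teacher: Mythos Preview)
Your proof is correct and follows essentially the same route as the paper's: both invoke Proposition \ref{004} to obtain $V$, then use the polar decomposition $V=U|V|$ together with the unitary average formula $P=\tfrac12(W+W^*)$ for a positive contraction, and finally appeal to Proposition \ref{004.4}. Your write-up is in fact slightly more careful than the paper's, since you explicitly normalize $|V|$ by $\|V\|$ before applying the contraction hypothesis and you spell out why injectivity of $V$ upgrades the partial isometry $U$ to a genuine isometry.
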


\begin{proof}
By Proposition \ref{004}, there exists a bounded and one-to-one operator $V\in B(H)$ such that $\Lambda_\omega=\Theta_\omega V$, $a.e.~ [\mu]$.
By above note, $V$ can be written as $V =\frac{1}{2}(UW+UW^*)$, where $U$ is an isometry and $W$ is unitary. So $UW$ and $UW^*$ are isometries. Proposition \ref{004.4} implies that $\{\Theta_\omega UW\}_{\omega \in \Omega}$ and
 $\{\Theta_\omega UW^*\}_{\omega \in \Omega}$ are Parseval  $cg$-frames for $H$.
\end{proof}

Now, we can show each $cg$-frame as a combination of a $cg$-orthonormal basis and a $cg$-Riesz basis of $H$.

\begin{theorem}
Assuming $\{\Theta_\omega \}_{\omega \in \Omega}$ as a $cg$-orthonormal basis for $H$, Every $cg$-frame for $H$ is sum  of  a
$cg$-orthonormal basis for $H$ and a $cg$-Riesz basis for $H$.
\end{theorem}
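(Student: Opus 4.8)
The plan is to transfer the problem to a single operator on $H$, exactly as in the two preceding theorems. By Proposition~\ref{004} there is a bounded, one-to-one operator $V\in B(H)$ with $\Lambda_\omega=\Theta_\omega V$, $a.e.~[\mu]$; and if $A,B$ are $cg$-frame bounds of $\{\Lambda_\omega\}_{\omega\in\Omega}$, then Theorem~\ref{ob1} gives $AI\le V^*V\le BI$, so that $|V|:=(V^*V)^{1/2}$ is positive and invertible with $\sqrt{A}\,I\le|V|\le\sqrt{B}\,I$. Taking the polar decomposition $V=U|V|$, the condition $\ker U=\ker V=\{0\}$ forces $U$ to be an isometry.

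Next I would split $V$ into a unitary plus an invertible summand. Since $\sigma(|V|)\subseteq(0,\infty)$, the operator $|V|+I$ is positive and invertible while $-I$ is unitary, and the identity $|V|=(|V|+I)+(-I)$ yields
\[
V=U|V|=U(|V|+I)+(-U).
\]
Put $W_1:=-U$, $W_2:=U(|V|+I)$, and then $\Psi_\omega:=\Theta_\omega W_1$, $\Gamma_\omega:=\Theta_\omega W_2$, $a.e.~[\mu]$. If $U$ is unitary, then $W_1=-U$ is unitary and $W_2$ is invertible (a unitary composed with the positive invertible $|V|+I$); then Theorem~\ref{ob2} shows $\{\Psi_\omega\}_{\omega\in\Omega}$ is a $cg$-orthonormal basis for $H$, and, since a $cg$-orthonormal basis is in particular a $cg$-Riesz basis, the earlier characterization of $cg$-Riesz bases under composition with invertible operators (with $\mu$ $\sigma$-finite) shows $\{\Gamma_\omega\}_{\omega\in\Omega}$ is a $cg$-Riesz basis for $H$. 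Finally $\Psi_\omega+\Gamma_\omega=\Theta_\omega(W_1+W_2)=\Theta_\omega V=\Lambda_\omega$, $a.e.~[\mu]$, which is the asserted representation.

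The delicate point is that the polar part $U$ of $V$ is a priori only an isometry, and $W_1=-U$ is unitary (equivalently $W_2$ is invertible) exactly when $V$ is onto; for a properly overcomplete $cg$-frame this fails, so the crude splitting $|V|=(|V|+I)+(-I)$ does not by itself produce a genuine unitary summand. (One could instead try Proposition~\ref{06}(ii), but it applies only when the operator is onto, i.e.\ only when the $cg$-frame is already a $cg$-Riesz basis.) To complete the argument one must either work under the implicit normalization that forces $V$ to be onto — in which case the computation above goes through verbatim — or replace the splitting by a dilation-type construction manufacturing a unitary summand out of the isometry $U$ together with $|V|$. I expect this unitarization step to be the only real obstacle; the remaining manipulations are a routine transcription through the synthesis operator of $\{\Theta_\omega\}_{\omega\in\Omega}$, just as in Propositions~\ref{09} and~\ref{004.4}.
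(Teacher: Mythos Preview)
Your reduction via Proposition~\ref{004} to an operator identity $V=W_1+W_2$ with $W_1$ unitary and $W_2$ invertible is exactly the route the paper takes (it simply defers to Theorem~4.2 of \cite{Ra}), and you have correctly located the crux: the polar factor $U$ of $V$ is only an isometry. But this is not a technicality that a dilation trick will repair; for the statement as literally written (``sum'') it is fatal. If $\{\Lambda_\omega\}_{\omega\in\Omega}$ is any Parseval $cg$-frame that is not a $cg$-Riesz basis, the associated $V=T_\Theta T_\Lambda^{*}$ is a non-surjective isometry. For every unitary $W_1$ the product $W_1^{*}V$ is again an isometry, and it is non-unitary since $(W_1^{*}V)(W_1^{*}V)^{*}=W_1^{*}VV^{*}W_1\neq I$; by the Wold decomposition its spectrum is the whole closed unit disk, so $1\in\sigma(W_1^{*}V)$ and $V-W_1=-W_1\bigl(I-W_1^{*}V\bigr)$ is never invertible. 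Thus no unitary summand can leave an invertible remainder, and your hoped-for ``unitarization step'' cannot exist in general.

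The paper's introduction, however, speaks of a \emph{linear combination} rather than a sum, and that version goes through without any polar decomposition at all: choose any unitary $W_1$ and any scalar $\alpha$ with $|\alpha|>\|V\|$; then $\|\alpha^{-1}W_1^{*}V\|<1$, so $V-\alpha W_1=-\alpha W_1\bigl(I-\alpha^{-1}W_1^{*}V\bigr)$ is invertible by a Neumann series, and
\[
\Lambda_\omega=\alpha\,\Theta_\omega W_1+\Theta_\omega(V-\alpha W_1),\quad a.e.~[\mu],
\]
with $\{\Theta_\omega W_1\}_{\omega\in\Omega}$ a $cg$-orthonormal basis (Theorem~\ref{ob2}) and $\{\Theta_\omega(V-\alpha W_1)\}_{\omega\in\Omega}$ a $cg$-Riesz basis. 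This is presumably the content of the cited Theorem~4.2. Your splitting $|V|=(|V|+I)+(-I)$ is therefore aiming at the wrong target; the essential device is the scalar $\alpha$ that pushes the spectrum of the remainder away from~$0$.
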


\begin{proof}
The proof is similar to the proof of Theorem 4.2 in \cite{Ra}.

\end{proof}

\bibliographystyle{amsplain}

\vspace{7mm}
\end{document}